\theoremstyle{plain}
\newtheorem{theorem}{Theorem}
\newtheorem{remark}{Remark}
\newtheorem{proposition}{Proposition}
\newcommand{\ii}{\mathrm{i}}
\newcommand{\Zz}{{\cal  Z} }
\newcommand{\DE}{\Delta_{ex}}
\newcommand{\DI}{\Delta_{in}}
\newcommand{\gM}{{\cal G}}
\title{Embedding the Grushin Cylinder in ${\bf R}^3$\\and Schroedinger evolution }
\begin{document}

\author{Ivan Beschastnyi\footnote{CIDMA - Centro de I\&D em Matematica e Aplicações, Departamento de Matematica Campus Universitario de Santiago, 3810-193, Aveiro, Portugal. ivan.beschastnyi@sorbonne-universite.fr} , Ugo Boscain\footnote{ CNRS, Laboratoire Jacques-Louis Lions, Sorbonne Université, Université de Paris, Inria, Boîte courrier 187, 75252 Paris Cedex 05 Paris, France. ugo.boscain@sorbonne-universite.fr} , Daniele Cannarsa\footnote{Department of Mathematics and Statistics, University of Jyv\"askyl\"a, 40014 Jyv\"askyl\"a, Finland daniele.d.cannarsa@jyu.fi},
Eugenio Pozzoli\footnote{Dipartimento di Matematica, Università di Bari, I-70125 Bari, Italy. e-mail: eugenio.pozzoli@uniba.it}} 

\maketitle

\begin{abstract}

We consider the evolution of a free quantum particle on the Grushin cylinder, under different type of quantizations. In particular we are interested to understand if the particle can cross the singular set i.e., the set where the structure is not Riemannian. We consider intrinsic and extrinsic quantizations where the last ones are obtained by embedding the Grushin structure isometrically in ${\bf R}^3$ (with singularities). As a byproduct we provide  formulas to embed the Grushin cylinder in ${\bf R^3}$ that could be useful for other purposes. Such formulas are not global, but permit to study the embedding arbitrarily close to the singular set.

We extend these results to the case of $\alpha $-Grushin  cylinders.

\end{abstract}
    
\section{Introduction}

The Grushin  plane is the almost-Riemannian structure  on  ${\bf R}^2$ for which a global orthonormal frame in the coordinates $(x,y)$ is given by
$$
X_1(x,y) =\partial_x,~~~~X_2(x,y)=x\,\partial_y.
$$
When the  $y$ direction is compactified on the circle $S^1$  we talk of the {\em Grushin cylinder}. In the following, when we are talking both about the Grushin plane and the Grushin cylinder we will refer to the {\em Grushin manifold} and we will indicate it with $\gM$.

The Grushin manifold has been extensively studied and has very interesting features. Let us list some of them (See for instance \cite{ABS,ABB}).
\begin{itemize}
\item Let $\Zz=\{ (x,y)\in \gM\mid x=0 \}$. On $\gM\setminus\Zz$ the structure  is Riemannian and the Riemannian metric  (in the coordinates $(x,y)$)  is
 \begin{equation}
 {\bf g}= \left(\begin{array}{cc} 1&0\\0&\frac{1}{x^2}  \end{array}\right).
 \label{befana}
  \end{equation}
Actually one could say that the Grushin manifold is the generalized Riemannian structure for which the inverse of the metric is well defined but not invertible on $\Zz$, 
$$
 {\bf g}^{-1}= \left(\begin{array}{cc} 1&0\\0&x^2  \end{array}\right).
 $$
  
The element of area and the Gaussian curvature are given by
$$
dA= \frac{1}{|x|}dx\,dy,~~~~~~K=-\frac2{x^2}
.$$
Notice that they diverge while approaching the singular set.

\item Although all Riemannian quantities diverge when $x\to0$, the geodesics are well defined, can cross the singular set with no singularities (see Figure \ref{f1}) and are easily computed via the Pontryagin maximum Principle. It is interesting to notice that geodesics can have conjugate points even if the curvature is always negative at each point where it is defined.

\end{itemize}

\begin{figure}
\begin{center}
\includegraphics[width=0.7\linewidth]{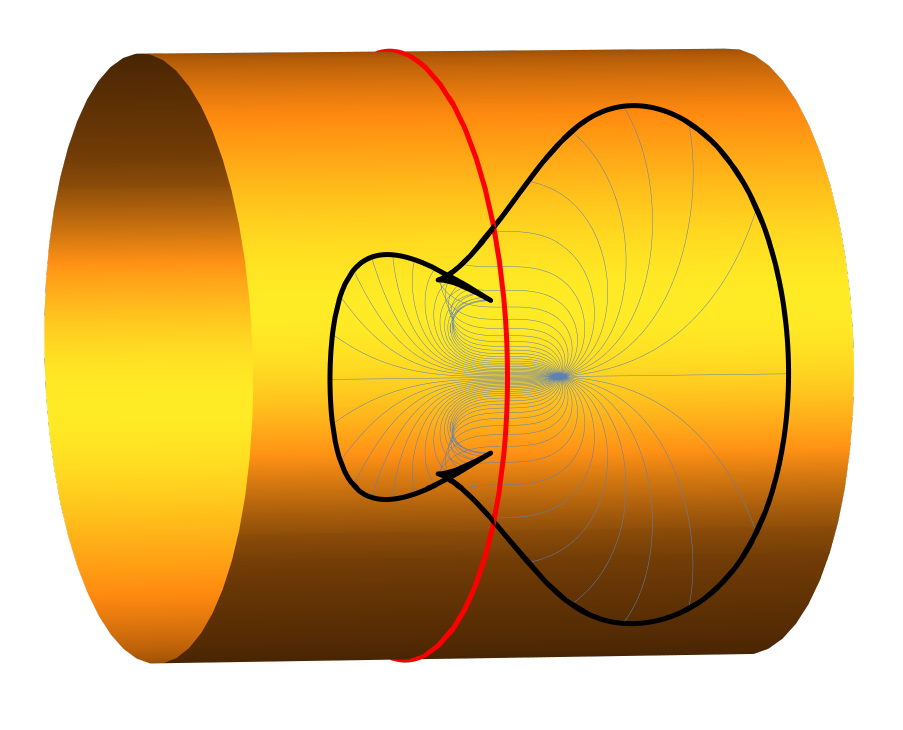}
\caption{Geodesics on the Grushin cylinder starting from the point 
$(1/4,0)$. The end point of geodesics for $T=1.3$ is also shown. Notice that geodesics cross smoothly the singular set (the red circle). Moreover, notice the presence of conjugate points even if the Gaussian curvature is always negative (where it is defined), cf. \cite{ABS,ABB}\label{f1}.}
\end{center}
\end{figure}

The Laplace-Beltrami operator on such structures is given by
\begin{equation}
\Delta={\rm div}_{dA} {\rm grad} =\partial_x^2+x^2\partial_y -\frac{1}{x}\partial_x.
\label{1}
\end{equation}
Here the gradient is computed with respect to the Riemannian structure, which is not singular since the inverse of the metric is well defined. The divergence is computed with respect to the element of area $dA$. The singularity in $dA$ produces the diverging first  order term in \eqref{1}.

One of the most interesting and counterintuitive features of the Grushin plane and of the Grushin cylinder is given by the following proposition.
\begin{proposition}[\cite{Boscain-Laurent-2013}]
\label{camillo}
$\Delta$ with domain $C^\infty_0(\gM\setminus\Zz)$ is essentially self-adjoint in $L^2(\gM,dA)$.
\end{proposition}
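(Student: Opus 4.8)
The plan is to exploit the translation invariance in the $y$ variable to reduce the two-dimensional spectral problem to a family of one-dimensional ones, and then apply Weyl's limit-point/limit-circle theory at the singular point $x=0$. Since the coefficients of $\Delta$ do not depend on $y$ and $y$ ranges over $S^1$, I would first decompose $L^2(\gM, dA)$ as the orthogonal Hilbert sum $\bigoplus_{k\in\mathbb{Z}} L^2(\mathbb{R}\setminus\{0\}, \tfrac{dx}{|x|})$ via Fourier series $u(x,y)=\sum_k u_k(x)e^{\ii k y}$. On the $k$-th mode $\partial_y$ becomes multiplication by $\ii k$, so $\Delta$ restricts to the one-dimensional operator
\[
\Delta_k = \partial_x^2 - \frac{1}{x}\partial_x - k^2 x^2,
\]
acting on $L^2(\mathbb{R}\setminus\{0\}, \tfrac{dx}{|x|})$. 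Essential self-adjointness of $\Delta$ on $C_0^\infty(\gM\setminus\Zz)$ then follows from essential self-adjointness of each $\Delta_k$ on $C_0^\infty(\mathbb{R}\setminus\{0\})$, using the standard fact that a direct sum of essentially self-adjoint operators is essentially self-adjoint on the algebraic direct sum of cores, which here sits inside $C_0^\infty(\gM\setminus\Zz)$.

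The next step is to remove the weight and the first-order term by the unitary map $U\colon L^2(\mathbb{R}\setminus\{0\},\tfrac{dx}{|x|})\to L^2(\mathbb{R}\setminus\{0\}, dx)$, $(Uf)(x)=|x|^{-1/2}f(x)$. A direct computation gives
\[
U\,\Delta_k\,U^{-1} = \partial_x^2 - \frac{3}{4x^2} - k^2 x^2,
\]
so that, up to sign, $\Delta_k$ is unitarily equivalent to a one-dimensional Schr\"odinger operator on the two half-lines $(0,\infty)$ and $(-\infty,0)$ with effective potential $V_k(x)=\tfrac{3}{4x^2}+k^2x^2$. Since $\Zz=\{x=0\}$ disconnects the two half-lines for functions in $C_0^\infty(\mathbb{R}\setminus\{0\})$, it suffices to treat one half-line, say $(0,\infty)$.

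The heart of the argument is the behaviour of $V_k$ at the two endpoints. The point $x=+\infty$ is always limit point (for $k\neq0$ because $V_k$ is confining, and for $k=0$ because the potential is bounded below). At $x=0$ the potential is dominated by the inverse-square term $\tfrac{3}{4x^2}$, whose indicial equation $\alpha(\alpha-1)=\tfrac34$ has roots $\alpha=\tfrac32$ and $\alpha=-\tfrac12$; since $x^{-1/2}\notin L^2$ near $0$, exactly one solution of the homogeneous equation is square-integrable at the origin, which is precisely the limit-point condition. I would formalize this through Weyl's test, recalling that $-\partial_x^2+\tfrac{c}{x^2}$ is limit point at $0$ exactly when $c\geq\tfrac34$. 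Being limit point at both endpoints, each $\Delta_k$ is essentially self-adjoint, and the proposition follows.

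The main obstacle, and the reason the statement is true, is that the Riemannian measure $dA=\tfrac{dx\,dy}{|x|}$ produces precisely the critical coupling $c=\tfrac34$ in the effective potential. This is the borderline value of Weyl's criterion: any weaker singularity would leave the origin in the limit-circle case, boundary conditions at $\Zz$ would be required, and essential self-adjointness would fail. Thus the delicate part is getting the exact constant in the conjugation computation and invoking the sharp threshold in the limit-point/limit-circle dichotomy, rather than any soft functional-analytic estimate.
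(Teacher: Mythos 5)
Your argument is correct, and it is essentially the approach this paper relies on: Proposition \ref{camillo} is quoted from \cite{Boscain-Laurent-2013} rather than proved here, but the technique you use (unitary conjugation by $|x|^{1/2}$ to pass to the flat measure, Fourier decomposition in $y$, and the sharp Weyl threshold stating that $-\frac{d^2}{dx^2}+\frac{c}{x^2}$ is limit point at $0$ exactly when $c\geq \frac{3}{4}$) is precisely the one the paper deploys in its proof of Proposition \ref{prop:not-self-adj}, and the critical coupling $c=\frac{3}{4}$ produced by the measure $dA$ is indeed the heart of the matter. One small caveat: your Fourier-series reduction covers the Grushin cylinder, whereas $\gM$ in the statement also denotes the Grushin plane, for which the decomposition in $y$ is a direct integral rather than a direct sum and requires a slightly more careful (though standard) fibered argument, or an appeal to the general result of \cite{Boscain-Laurent-2013}.
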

An equivalent statement is the following. Define
$$
\gM^{+}=\{ (x,y)\in \gM\mid x>0 \},~~~ \gM^{-}=\{ (x,y)\in \gM\mid x<0 \}.
$$
\begin{proposition}[\cite{Boscain-Laurent-2013}]
\label{camillobis}
$\Delta$ with domain $C^\infty_0(\gM^+)$ is essentially self-adjoint in $L^2(\gM^+,dA)$. The same result holds for $\Delta$ on $\gM^-$. 
\end{proposition}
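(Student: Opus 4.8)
The plan is to exploit the translational (rotational) symmetry of $\Delta$ in the $y$-variable to reduce the two–dimensional problem on $\gM^+$ to a family of one–dimensional Schr\"odinger operators on the half–line $(0,\infty)$, and then to settle each of these by Weyl's limit–point/limit–circle theory. I treat the cylinder; the plane is identical after replacing the Fourier series by the Fourier transform (a direct integral in place of a direct sum). Writing a general element of $L^2(\gM^+,dA)$, with $dA=\tfrac1x\,dx\,dy$, as a Fourier series $\psi(x,y)=\sum_{k\in{\bf Z}}\psi_k(x)e^{iky}$, the operator $\Delta=\partial_x^2+x^2\partial_y^2-\tfrac1x\partial_x$ block–diagonalizes, acting on the $k$-th mode as
$$
\Delta_k=\frac{d^2}{dx^2}-\frac1x\frac{d}{dx}-k^2x^2
$$
on $L^2((0,\infty),\tfrac1x\,dx)$. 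Since $\psi_k(x)e^{iky}\in C^\infty_0(\gM^+)$ whenever $\psi_k\in C^\infty_0((0,\infty))$ (its support is a compact subset of $\gM^+$), it suffices to prove that each $\Delta_k$ is essentially self-adjoint on $C^\infty_0((0,\infty))$.

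First I would remove the singular weight by the Liouville transformation $U\colon L^2((0,\infty),\tfrac1x\,dx)\to L^2((0,\infty),dx)$, $(U\phi)(x)=x^{-1/2}\phi(x)$, which is unitary. A direct computation gives
$$
U\,\Delta_k\,U^{-1}=\frac{d^2}{dx^2}-V_k(x),\qquad V_k(x)=\frac{3}{4x^2}+k^2x^2 ,
$$
so that $\Delta_k$ is unitarily equivalent to $-H_k$ with $H_k=-\frac{d^2}{dx^2}+V_k$ a Schr\"odinger operator with nonnegative potential; essential self-adjointness is preserved under this equivalence, so the question becomes whether $H_k$ is in the limit–point case at both endpoints $0$ and $\infty$.

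At $\infty$ this is immediate for every $k$: $V_k$ is bounded below (indeed nonnegative), so by Weyl's criterion $H_k$ is limit point at $+\infty$. The decisive endpoint is $x=0$, where $V_k$ is dominated by the inverse–square term $\tfrac{3}{4x^2}$. For a potential $\tfrac{c}{x^2}$ near the origin the two solutions of $H_k u=\lambda u$ behave like $x^{s_\pm}$ with $s_\pm=\tfrac12\big(1\pm\sqrt{1+4c}\big)$, and both lie in $L^2$ near $0$ if and only if $s_->-\tfrac12$, i.e. if and only if $c<\tfrac34$. Here $c=\tfrac34$ exactly, so $s_-=-\tfrac12$ and $x^{s_-}\sim x^{-1/2}$ (even allowing a logarithmic correction, since $s_+-s_-=2\in{\bf Z}$) fails to be square–integrable at $0$; thus only one solution is $L^2$ near the origin and $H_k$ is limit point at $0$ as well. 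This borderline coupling is precisely the mechanism behind the proposition: the singularity $\tfrac1x$ of the area form produces exactly the critical effective potential $\tfrac{3}{4x^2}$, which is what prevents nontrivial self-adjoint boundary conditions at $\Zz$ from appearing. Being limit point at both ends, each $H_k$ — hence each $\Delta_k$ — is essentially self-adjoint on $C^\infty_0((0,\infty))$.

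Finally I would reassemble the pieces through the deficiency indices. If $u\in L^2(\gM^+,dA)$ lies in $\ker(\Delta^\ast\mp i)$, then testing the weak equation $\langle u,\Delta\phi\rangle=\pm i\langle u,\phi\rangle$ against $\phi=\psi(x)e^{iky}$, $\psi\in C^\infty_0((0,\infty))$, shows that the $k$-th Fourier coefficient $u_k$ satisfies $\Delta_k^\ast u_k=\pm i\,u_k$ in the weak sense; by the essential self-adjointness of $\Delta_k$ this forces $u_k=0$ for every $k$, whence $u=0$. Therefore $\Delta|_{C^\infty_0(\gM^+)}$ has deficiency indices $(0,0)$ and is essentially self-adjoint, and the statement on $\gM^-$ follows by the reflection $x\mapsto -x$. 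I expect the only genuinely delicate point to be the verification at $x=0$ that the critical constant $c=\tfrac34$ still falls in the limit–point case (including the treatment of the logarithmic resonance coming from $s_+-s_-\in{\bf Z}$); the remainder is a routine separation of variables combined with the standard limit–point/limit–circle machinery.
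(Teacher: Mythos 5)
Your proof is correct and is essentially the same argument as in the cited source \cite{Boscain-Laurent-2013} (the paper itself only quotes this result without proof): unitarily removing the weight $1/x$ via $\phi\mapsto x^{-1/2}\phi$ to produce the critical inverse-square potential $\tfrac{3}{4x^2}$, separating variables by Fourier decomposition in $y$, and applying Weyl limit-point/limit-circle theory at both endpoints is exactly the standard route, and it is also the same technique this paper uses for the extrinsic operator in Proposition \ref{prop:not-self-adj}. Your treatment of the borderline constant $c=\tfrac34$ (limit point at $0$, cf.\ Theorem X.10 of Reed--Simon) and the deficiency-index reassembly of the Fourier modes are both sound.
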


This type of results has been extensively studied (see for instance \cite{Prandi-Rizzi-Seri-2016,GMP-Grushin-2018,Franceschi-Prandi-Rizzi-2017}) and they hold in a much general context \cite{Boscain-Laurent-2013,ivan-gruppoidi}.

As a consequence of Proposition \ref{camillo}  or \ref{camillobis}, the Cauchy problems for the heat and the Schroedinger equations
$$
\left\{\begin{array}{l} \partial_t \phi(t,p)=\Delta\phi(t,p),\\  \phi(0,\cdot)=\phi_0(\cdot)\in L^2(\gM,dA) \end{array}\right.~~~~~~~~
\left\{\begin{array}{l} i\hbar\partial_t \psi(t,p)=-\frac{\hbar^2}{2m}\Delta\psi(t,p),\\  \psi(0,\cdot)=\psi_0(\cdot)\in L^2(\gM,dA) \end{array}\right.
$$
do not permit any communication between $\gM^+$ and $\gM^-$ since they are are well defined on $\gM^+$ (respectively $\gM^-$).
More precisely if $\phi_0$ and $\psi_0$ are supported in $\gM^+$ (respectively $\gM^-$), then $\phi(t)$ and $\psi(t)$ are supported in $\gM^+$ (respectively $\gM^-$) for any $t\geq0$. 

Such result is rather counterintuitive since geodesics smoothly cross the singular set and the intuition  for the heat equation (respectively for the Schroedinger equation) that quick stochastic particles  (respectively  a well confined wave packet) ``follow'' classical geodesics does not work. For this reason it would be nice to have a more  ``dynamical'' explanation of the absence of communication between $\gM^+$ and $\gM^-$.

For the heat equation an explanation in terms of a limit random walk is possible and is related to the interpretation of \eqref{1} in terms of Bessel processes \cite{boscain-neel}. For the Schroedinger equation such results are more difficult to interpret.

The difficulty of the interpretation of the result for the Schroedinger equation is actually complicated by the fact that ``what is the Schroedinger equation describing the evolution of a free quantum particle in a Riemannian manifold'' has no a unambiguous answer, even without the difficulties introduced by deep singularities present in the Grushin manifold.

As a matter of fact, there are several different approaches that make appearing curvature in the operator describing the quantization of a free particle in a Riemannian manifold. We divide these approaches in two categories: {\em intrinsic} and {\em extrinsic}.\\[2mm]
{\bf Intrinsic approaches}\\
 Most of coordinate invariant quantization procedures on a Riemannian manifold $M$ modify the quantum Hamiltonian  $-\frac{\hbar^2}{2m}\Delta$, where $\Delta$ is the Laplace-Beltrami operator, 
by a correction term depending on the scalar curvature $R$. On a Riemannian manifold  of dimension $2$  the scalar curvature is twice the Gaussian curvature $K$ and the modified Schroedinger equation is of the form
$$
\ii\hbar\,\partial_t\psi(t,p)=-\frac{\hbar^2}{2m}\DI \psi(t,p) ,
~~~p\in M\mbox{ (of dimension 2)}
$$
where
$$
\DI=\Delta-cK(p)
$$
and  $c\geq0$ is a constant.  Values given in the literature include:
\begin{itemize}
\item path integral quantization: $c=1/3$ and  $c=2/3$ in  \cite{driver-22}, $c=1/2$ in \cite{driver-20};
\item covariant Weyl quantization: $c\in[0,2/3]$ including conventional Weyl quantization ($c=0$) in \cite{fulling};
\item geometric quantization for a real polarization: $c=1/3$ in  \cite{driver-97};
\item finite dimensional approximations to Wiener Measures: $c=2/3$ in \cite{driver}.
\end{itemize}

We refer to \cite{driver,fulling} for interesting discussions on the subject.

In \cite{Ivan-Ugo-Eugenio-2020,ivan-gruppoidi} it has been proven that the only value of $c$ for which there is no communication between $\gM^+$ and $\gM^-$ is $c=0$.  

\begin{proposition}
\label{c=0}
Let $c\geq0$.  The operator  $\displaystyle\Delta-c K(x,y)=(\partial_x^2+x^2\partial_y -\frac{1}{x}\partial_x)+c\frac{2}{x^2}$
with domain $C^\infty_0(\gM\setminus\Zz)$ is essentially self-adjoint in $L^2(\gM,dA)$ if and only if $c=0$.
\end{proposition}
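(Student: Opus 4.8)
The plan is to reduce the two-dimensional problem to a family of one-dimensional Sturm--Liouville operators by separating the $y$ variable, and then to read off essential self-adjointness from Weyl's limit-point/limit-circle dichotomy at the singular locus $\Zz=\{x=0\}$. Since the operator commutes with translations in $y$, I would first decompose $L^2(\gM,dA)$ over the Fourier modes $e^{\ii k y}$ (with $k\in\mathbf{Z}$ on the cylinder, $k\in\mathbf{R}$ on the plane). On a mode $u(x)e^{\ii k y}$, using $\partial_y^2\mapsto -k^2$, the operator acts as
$$
A_{c,k}=\frac{d^2}{dx^2}-\frac1x\frac{d}{dx}-k^2x^2+\frac{2c}{x^2}
$$
on $L^2(\mathbf{R},|x|^{-1}dx)$. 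Because the inverse-square term is singular at $x=0$, the half-lines $\{x>0\}$ and $\{x<0\}$ decouple and can be treated separately and symmetrically. As $C^\infty_0(\gM\setminus\Zz)$ is spanned by such separated functions and each mode is preserved, the deficiency indices of the closure add over $k$; hence the full operator is essentially self-adjoint if and only if every $A_{c,k}$ is.

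Next I would remove the weight $|x|^{-1}$ by the unitary map $(V\phi)(x)=|x|^{-1/2}\phi(x)$ from $L^2((0,\infty),x^{-1}dx)$ onto $L^2((0,\infty),dx)$. Writing $\phi=x^{1/2}u$ and computing $VA_{c,k}V^{-1}$, the first-order term cancels and the Laplacian picks up the geometric shift $-\tfrac34 x^{-2}$, turning $A_{c,k}$ into the Schr\"odinger form
$$
\widehat A_{c,k}=\frac{d^2}{dx^2}-k^2x^2+\frac{2c-3/4}{x^2},
\qquad
-\widehat A_{c,k}=-\frac{d^2}{dx^2}+k^2x^2+\frac{3/4-2c}{x^2}.
$$
The key structural point is that the quantization constant enters only through the coefficient $g:=3/4-2c$ of the inverse-square potential at $x=0$: the curvature correction $2c/x^2$ adds to the $-3/4$ produced by the singular measure, so that $c=0$ lands exactly on $g=3/4$.

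Finally I would invoke Weyl's criterion mode by mode. At $x=+\infty$ the potential is confining for $k\neq0$ (and the free half-line operator is limit point at infinity for $k=0$), so the operator is always limit point there. At $x=0$ the potential $g/x^2$ is limit point precisely when $g\geq 3/4$ and limit circle when $g<3/4$; this follows from the indicial exponents $s_\pm=\tfrac12(1\pm\sqrt{1+4g})$, both of whose solutions $x^{s_\pm}$ are $L^2$ near $0$ iff $s_->-\tfrac12$, i.e.\ iff $g<3/4$. Thus for $c=0$ one has $g=3/4$, every mode is limit point, the operator is essentially self-adjoint, and one recovers Proposition~\ref{camillo}; whereas for $c>0$ one has $g<3/4$, each half-line operator is limit circle at $0$ and limit point at $\infty$, so it has deficiency indices $(1,1)$, and the full operator fails to be essentially self-adjoint. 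The delicate step, and the whole point of the argument, is the exact bookkeeping of the $-3/4$ shift coming from $dA=|x|^{-1}dx\,dy$: it places the Laplace--Beltrami operator precisely at the limit-point threshold, so that any positive correction $c>0$ pushes every mode into the limit-circle regime and restores boundary conditions—hence communication—across $\Zz$.
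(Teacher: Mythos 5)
Your proof is correct, and it is worth situating it against the paper: the paper itself offers no proof of Proposition \ref{c=0}, which it quotes from \cite{Ivan-Ugo-Eugenio-2020,ivan-gruppoidi}, where the statement is established in the more general framework of two-dimensional step-2 almost-Riemannian manifolds. Your argument is a self-contained direct proof, and it follows exactly the scheme the paper employs for the one self-adjointness result it does prove, Proposition \ref{prop:not-self-adj}: conjugation by $|x|^{\pm1/2}$ to pass from $L^2(|x|^{-1}dx\,dy)$ to $L^2(dx\,dy)$, which removes the first-order term at the price of the shift $-\tfrac{3}{4}x^{-2}$; Fourier decomposition in $y$; and Weyl's limit-point/limit-circle alternative on each fiber. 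Your bookkeeping is the heart of the matter and it is right: the fiber potential near $x=0$ is $(3/4-2c)x^{-2}$ up to bounded terms, the coupling $3/4$ is exactly the limit-point threshold, so $c=0$ gives essential self-adjointness of every mode (recovering Proposition \ref{camillo}), while any $c>0$ makes every mode limit circle at $0$ with deficiency indices $(1,1)$ on each half-line, so the closure is not self-adjoint.

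Two details deserve tightening, neither of which undermines the argument. First, your justification of the limit-circle property through the indicial exponents $s_\pm=\tfrac12\bigl(1\pm\sqrt{1+4g}\bigr)$ is literally valid only when $1+4g\geq 0$, i.e.\ $c\leq 1/2$; for $c>1/2$ the exponents are complex and the two solutions behave like $x^{1/2}\cos(\omega\log x)$ and $x^{1/2}\sin(\omega\log x)$, which are again both square-integrable near $0$, so the limit-circle conclusion persists for every $g<3/4$ (this is the classical inverse-square threshold, cf.\ \cite[Theorem X.10]{rs2}; the paper invokes \cite[Theorem X.7 \& Problem 7]{rs2} for the same purpose). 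Second, the phrase ``the deficiency indices of the closure add over $k$'' is accurate on the cylinder, where the Fourier decomposition is a direct sum; on the Grushin plane the modes form a direct integral over $k\in{\bf R}$, and there the failure of essential self-adjointness is cleanest to conclude by exhibiting a deficiency vector directly, e.g.\ $\psi(x,y)=\int\chi(k)\,u_k(x)\,e^{\ii ky}\,dk$ with $\chi$ a compactly supported cutoff and $(u_k)$ a measurable normalized field of solutions of $(\hat{A}_{c,k}^{\,*}-\ii)u_k=0$, while the positive direction ($c=0$) uses the standard fact that a direct integral of essentially self-adjoint fibers is essentially self-adjoint on the natural core. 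With these two repairs the proof is complete.
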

For $c\neq0$ there exists a self-adjoint extension of $\Delta-c K$ which permits a communication between $\gM^+$ and $\gM^-$.

As for Proposition \ref{camillo}, Proposition \ref{c=0} has been proven in the more general context of 2-di\-men\-sional, step 2 almost-Riemannian manifolds.
\\[2mm]
{\bf Extrinsic approaches}\\
There are  other approaches to the quantization process on Riemannian manifolds that provide correction terms depending on the curvature. 
Let $S_\varepsilon$ be a $\varepsilon$-tubular neighborhood of an orientable surface $S$ in ${\bf R}^3$:
\begin{equation}
S_\varepsilon = \{p+\tau\,n(p) \,:\, p\in S, \tau\in[-\varepsilon,\varepsilon]\},\label{intorno}
\end{equation}
where $n(p)$ is the normal vector to $S$ at a point $p\in S$. 
Consider the standard Euclidean Laplacian with Dirichlet boundary conditions, then for $\varepsilon\to0$, after a suitable renormalization, one gets an operator containing a correction term depending on the Gaussian curvature and the square of the mean curvature, that is the quantum Hamiltonian,
$$
-\frac{\hbar^2}{2m}\DE,
$$
where 
$$
\DE=\Delta -K(p)+H^2(p).
$$
More precisely the use of this quantum Hamiltonian is justified by the following result.

\begin{theorem}
\label{tubular}
Let $S$ be an orientable surface in $\mathbf{R}^3$, $K$ and $H$ its Gaussian and mean curvature, and $\Delta$ its Laplace-Beltrami operator. Consider the $\varepsilon$-tubular neighbourhood $S_\varepsilon$ of the surface. Let $\Delta^\varepsilon_{DD}$ be the Laplacian of $S_\varepsilon$ with Dirichlet boundary conditions on the two sides of the neighbourhood. Consider $\Delta^\varepsilon_{DD}$ with domain 
\begin{equation}
\label{eq:domain}
D(\Delta^\varepsilon_{DD}) = \left\{\frac{1}{\sqrt \varepsilon} \cos \left(\frac{\pi \tau}{2\varepsilon}\right)\frac{ \psi}{\sqrt{1-2H\tau +K\tau^2}} \, : \, \psi\in C^\infty_c(S) \right\}.
\end{equation}
Then  $\Delta^\varepsilon_{DD}$ has the following asymptotic behavior as $\varepsilon$ tends to zero:
\begin{equation}
\label{eq:formula}
\Delta^\varepsilon_{DD}=-\Big(\dfrac{\pi}{2\varepsilon}\Big)^2+ \underbrace{\Delta-K(p)+H^2(p)}_{=:\DE}+O(\varepsilon).
\end{equation}
\end{theorem}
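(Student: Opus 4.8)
The plan is to pass to Fermi (tubular) coordinates adapted to $S$, flatten the induced volume by a unitary change of weight, and then reduce the statement to a one--dimensional transverse conjugation together with a Taylor expansion in the normal variable $\tau$.

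First I would parametrise $S_\varepsilon$ by $(q,\tau)\mapsto p(q)+\tau\,n(p(q))$, with $q$ local coordinates on $S$ and $\tau\in[-\varepsilon,\varepsilon]$. Using the Weingarten relation $\partial_\alpha n=-L^\gamma_\alpha\partial_\gamma p$, where $L$ is the shape operator, the induced metric is block diagonal,
$$
G=\begin{pmatrix}(I-\tau L)^{\mathsf T}g(I-\tau L)&0\\0&1\end{pmatrix},
$$
so that $\sqrt{\det G}=J\sqrt{\det g}$ with $J(q,\tau)=\det(I-\tau L)=1-2H\tau+K\tau^2$, using $\operatorname{tr}L=2H$ and $\det L=K$. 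For $\varepsilon$ small enough that the tube is embedded one has $J>0$, hence $J^{1/2}$ is well defined, and the Laplace--Beltrami operator splits as
$$
\Delta_{S_\varepsilon}=\frac1J\,\partial_\tau(J\,\partial_\tau)+\frac1{J\sqrt{\det g}}\,\partial_\alpha\!\big(J\sqrt{\det g}\;G_\parallel^{\alpha\beta}\partial_\beta\big),
$$
where $G_\parallel$ is the upper block; the absence of mixed second-order terms reflects orthogonality of $\partial_\tau$ to $S$.

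Next I would flatten the measure by the unitary map $U\phi=J^{1/2}\phi$ from $L^2(S_\varepsilon,\sqrt{\det G}\,dq\,d\tau)$ onto $L^2(S\times[-\varepsilon,\varepsilon],\sqrt{\det g}\,dq\,d\tau)$, and set $\widehat\Delta:=U\,\Delta_{S_\varepsilon}\,U^{-1}$, which is unitarily equivalent to $\Delta^\varepsilon_{DD}$; proving the stated asymptotics for $\widehat\Delta$ is equivalent to \eqref{eq:formula}. The weight $J^{1/2}$ is chosen precisely so that the domain \eqref{eq:domain} is carried to product functions: writing $\chi_0(\tau)=\tfrac1{\sqrt\varepsilon}\cos(\pi\tau/2\varepsilon)$ for the $L^2$-normalised Dirichlet ground state on $[-\varepsilon,\varepsilon]$, which satisfies $\chi_0''=-(\pi/2\varepsilon)^2\chi_0$ and $\chi_0(\pm\varepsilon)=0$, one has $U(\chi_0 J^{-1/2}\psi)=\chi_0\psi$. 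The transverse conjugation is the standard one--dimensional computation: for the weight $J$,
$$
J^{1/2}\Big(\tfrac1J\partial_\tau(J\partial_\tau)\Big)J^{-1/2}=\partial_\tau^2+V,\qquad V=\tfrac14\Big(\tfrac{J'}{J}\Big)^2-\tfrac12\tfrac{J''}{J},
$$
and since $J(q,0)=1$, $J'(q,0)=-2H$, $J''(q,0)=2K$ this gives $V(q,0)=H^2-K$. The conjugated longitudinal block reduces at $\tau=0$ to the intrinsic Laplace--Beltrami operator $\Delta$ of $S$, because there $G_\parallel\to g$, $J\to1$ and $\partial_\alpha J^{1/2}\to0$.

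Applying $\widehat\Delta$ to $\chi_0(\tau)\psi(q)$ and using $\chi_0''=-(\pi/2\varepsilon)^2\chi_0$ then yields
$$
\widehat\Delta(\chi_0\psi)=\chi_0\Big(-(\tfrac{\pi}{2\varepsilon})^2+\Delta-K+H^2\Big)\psi+R,
$$
which is exactly \eqref{eq:formula} once the remainder $R$ is controlled. The main bookkeeping is to show $\norm{R}=O(\varepsilon)$: every discarded term in the $\tau$-expansions of $V$ and of the longitudinal block carries at least one factor $\tau$ (for $V$ the first correction is $4H(H^2-K)\tau$, while $G_\parallel^{\alpha\beta}=g^{\alpha\beta}+2\tau h^{\alpha\beta}+O(\tau^2)$ for the longitudinal part), whereas $\norm{\tau^k\chi_0}_{L^2([-\varepsilon,\varepsilon])}=O(\varepsilon^k)$. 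The leading surviving contribution $\propto\tau\chi_0$ is therefore $O(\varepsilon)$ in $L^2$, with a constant depending on finitely many derivatives of $\psi$ (finite since $\psi\in C^\infty_c(S)$). I expect this uniform remainder estimate --- organising the $\tau$-expansion of the longitudinal block and tracking the $\varepsilon$-scaling of the transverse integrals --- to be the main technical obstacle, while the transverse effective potential $H^2-K$ and the leading longitudinal term $\Delta$ fall out of the algebra above.
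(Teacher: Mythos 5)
Your proposal follows essentially the same route as the paper: Fermi coordinates in which the volume factor is exactly $1-2H\tau+K\tau^2$, the splitting $\Delta^\varepsilon=\partial_\tau^2+\frac{\partial_\tau h}{h}\partial_\tau+\Delta_{S(\tau)}$, the unitary flattening of the measure by $h^{\pm1/2}$ producing the effective potential that equals $H^2-K$ at $\tau=0$, and separation of variables against the transverse Dirichlet ground state $\cos(\pi\tau/2\varepsilon)$. The only cosmetic difference is that the paper rescales $\tau=s\varepsilon$ to the fixed interval $[-1,1]$ before expanding in powers of $\varepsilon$, whereas you track powers of $\tau$ directly via $\norm{\tau^k\chi_0}=O(\varepsilon^k)$; both are the same bookkeeping, and like the paper (which defers the rigorous norm-resolvent statement to the cited reference) you correctly identify the remainder estimate as the remaining technical content.
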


Theorem \ref{tubular} is an adaptation of a more general result presented in \cite{David3}. See also \cite {David2,Lampart}.
In these works the curvatures are assumed to be bounded. In our case, since we want to apply the result to the Grushin manifold, we localized it on compacts. The term $-K(p)+H^2(p)$ is usually called the {\em effective potential}.

\medskip
Let us give a motivation for the formula~\eqref{eq:formula} and the domain of $D(\Delta^\varepsilon_{DD})$.  The Euclidean volume $\omega$ on $S_\varepsilon$ (cf. \ref{intorno})  has the form
$$
\omega =h(p,\tau)d\tau \wedge dA.
$$
Then it is not difficult to see (see for instance~\cite{tubes}) that $h$ admits the development (which is actually exact)
$$
 h(p,\tau)=1-2H(p) \tau + K(p)\tau^2.
$$
This is positive for small $t$.
Moreover the Euclidean Laplacian on $S_\varepsilon$ can be written as
$$
\Delta^\varepsilon = \partial^2_{\tau} +\frac{\partial_\tau h}{h} \partial_\tau + \Delta_{S(\tau)}, 
$$
where $S(\tau)$, $\tau\in[-\varepsilon,\varepsilon]$ is the surface diffeomorphic to $S$ given by 
\begin{equation}
S(\tau) = \{p+\tau\,n(p) \,:\, p\in S\},\label{intorno2}
\end{equation}
and $\Delta_{S(\tau)}$ is the Laplace-Beltrami operator on $S(\tau)$. Notice that  $\Delta_{S(0)}=\Delta$, the Laplace-Beltrami operator on $S$.
    
 Now, to work in a Hilbert space which does not depend on $\varepsilon$, first we perform the unitary transform:
$$
U:L^2(S_\varepsilon, d\tau \wedge dA)\to L^2(S_\varepsilon, {h}d\tau\wedge dA),
$$
given by
$$
U: f \mapsto h^{{-}1/2}f.
$$
This gives us
$$
U^{-1}\Delta^{\varepsilon} U = \partial_\tau^2 +  {U^{-1}\Delta_{S(\tau)}U}+\frac{{(}\partial_\tau h{)^{2}}-2h\,\partial^2_\tau h}{4h^2}.
$$
The next step is to rescale the $\tau$ variable by taking $\tau=s\varepsilon$, expand the last expression in powers of $\varepsilon$ and recover this way
\begin{equation}
\label{eq:after_unitary}
U^{-1}\Delta^{\varepsilon} U = \varepsilon^{-2}\partial_s^2 + \Delta + H^2-K+ O(\varepsilon). 
\end{equation}
Thus if we ignore the higher order terms, we get an operator, where one can separate the variables. Hence let us apply $U^{-1}\Delta U$ to
$$
\phi(p,s)= \chi(s)\psi(p), \qquad (p,s)\in  S\times [-1,1] \simeq S_\varepsilon.
$$
As we can see in~\eqref{eq:after_unitary} the first term always diverges. Because of this, it makes sense to choose a specific function $\chi$ in order to be able to compensate for this divergent term. In particular, we can choose $\chi$ to be the first eigenfunction of $\partial^2_s$ on $[-1,1]$ with  Dirichlet boundary conditions, which is given by
$$
\chi(s) = \cos\left(\frac{\pi s}{2} \right).
$$

Reverting the scaling and unitary transformation we arrive at the domain~\eqref{eq:domain} and asymptotics~\eqref{eq:formula} meaning that
if
$$
\varphi(\tau,p) = \cos\left(\frac{\pi \tau}{2\varepsilon} \right)\psi(p){h^{-1/2}(\tau,p)},
$$
we get 
$$
\frac{\langle \varphi,\Delta^{\varepsilon}_{DD}\varphi \rangle}{\langle \varphi, \varphi\rangle}=-\Big(\dfrac{\pi}{2\varepsilon}\Big)^2+ \frac{\langle \psi,\DE\psi \rangle}{\langle \psi, \psi\rangle}+O(\varepsilon).
$$
Here $\Delta^\varepsilon$ is called $\Delta^\varepsilon_{DD}$ since Dirichlet boundary conditions are already imposed. In this formula the scalar product on the left hand side is computed in $L^2(S_\varepsilon,hd\tau\wedge dA)$ and the scalar product on the right hand side is 
computed in $L^2(S,dA)$.

 In the article~\cite{David3} the authors prove that $\Delta^\varepsilon_{DD}+(\pi/2\varepsilon)^2$ converges to $\Delta_{ex}$ in an appropriate norm-resolvent sense and derive consequences for the localization of spectrum as $\varepsilon \to 0$. 

\medskip
Purpose of this paper is to apply the extrinsic approach for the Grushin cylinder trying to isometrically embed it as a surface of revolution as close as possible to $\Zz$, to compute its mean curvature $H$, and hence $\Delta_{ex}$.
As expected from the divergence of the Gaussian curvature $K$ while approaching $\Zz$ we are going to discover that the Grushin  cylinder cannot  be embedded globally up to the singularity, but only in the interval $x\in[1,+\infty[$. Actually the singularity of the embedding reflects in an explosion of the mean curvature $H$ for $x\to1$. We call this embedding \emph{the Grushin trumpet bell} (see Figure \ref{f2}). We then study the self-adjointness of $\DE$ on $x\in]1,+\infty[$ and we prove that  $\DE$ is not essentially self-adjoint in $L^2$, with respect to the volume $dA$.

In order to go closer to the singular set $\Zz$ we also study other type of embeddings in which the surface is wrapped $n^2$ times around its axis of revolution  and for which the embedding is possible for $x\in[\frac1n,+\infty[$ (\emph{Grushin $n^2$-winded bell}, see Figure \ref{f3}).

Finally we generalize this study to the $\alpha$-Grushin cylinder 
which is the generalized Riemannian structure on   ${\bf R}\times S^1$ for which a global orthonormal frame  is given by
$$
X_1(x,y) =\partial_x,~~~~X_2(x,y)=|x|^\alpha\,\partial_y,~~~\alpha\in{\bf R}.
$$
The $\alpha$-Grushin cylinder has been studied with different objectives in several papers (see for instance \cite{Boscain-Prandi-JDE-2016,beauchard1,Gallone-Michelangeli-Pozzoli-2020,cirillo} and references therein).

\subsection{Historical remarks}

The Grushin metric has been introduced by Baouendi \cite{baouendi} and Grushin \cite{grusin1} at the end of sixties. At the time people were interested in degenerate elliptic operators, and in particular in the following operator in ${\bf R}^2$
\begin{equation}
\partial_x^2+x^2\partial_y^2.\label{gabbia}
\end{equation}
Such operator contains the metric \eqref{befana} in the sense that it can be written as $\sum_{j,k=1}^2 g^{jk}\partial_j\partial_k$ where $(g^{jk})$ is the inverse of the metric \eqref{befana}.
 It is interesting to notice that the operator \eqref{gabbia} it is not the Laplace-Beltrami operator (i.e., the divergence w.r.t. the Riemannian volume of the gradient) of the metric \eqref{befana}. Notice that, accidentally, the operator \eqref{gabbia} is symmetric w.r.t. the Lebesgue measure of the plane. However  in general if $\{X_1,X_2\}$ is a (generalized) orthonormal frame on a 2D manifold, the operator sum of squares  $(X_1)^2+ (X_2)^2$ is not  symmetric w.r.t. any volume.

\section{Embedding the Grushin half-cylinder in ${\bf R}^3$ as a surface of revolution}

Consider the manifold ${\bf R}^+\times S^1$ with the Riemannian metric  
$$
 {\bf g}= \left(\begin{array}{cc} 1&0\\0&g(x)^2 \end{array}\right),~~~(x,y)\in  {\bf R}^+\times S^1.
 $$
 Under the condition $g'(x)\leq1$, this Riemannian manifold can be realized isometrically 
as the surface of revolution 
\begin{equation}
\left\{
\begin{array}{l}
z_1=g(x)\cos y\\
z_2=g(x) \sin y\\
z_3=h(x), 
\end{array}
\right.\label{revolution}
\end{equation}
where $h(x)=z_{30}+\int_{x_0}^x\sqrt{1-g'(s)^2}\,ds$. At the points $x$ such that $g(x)=0$ the system of coordinates $(x,y)$ is singular (in a similar way in which polar coordinates on the plane are singular in $(0,0$)).

Here $x_0$ and $z_{30}$ are two  constants (of which only one is independent) that fix how the surface is placed on the $z$ axis ($z_{30}=h(x_0)$). 
For this embedding we have the formula for the Gaussian curvature
$$
K=-\frac{h'(x)}{g(x)}(h'(x)g''(x)-h''(x)g'(x)),
$$
and the formula for the mean curvature
$$
H=\frac{1}{2g(x)}\Big(h'(x)-g(x)(h'(x)g''(x)-h''(x)g'(x))\Big).
$$

Notice that the sign of $H$ depends on the choice of the unit normal. However, in the following, nothing depends on this choice  since only $H^2$ appears.
For the Grushin half-cylinder we have $g(x)=1/x$. Hence  $g'\leq1$ for  $x\geq1$ and the embedding takes the form (here $x_0$ and $z_{30}$ are fixed  in such a way that $z_3(1)=1$),
\begin{equation}
\left\{
\begin{array}{l}
z_1=\frac1x \cos y\\
z_2=\frac1x \sin y\\
z_3=1+ \int_{1}^x\sqrt{1-\frac{1}{s^4}}\,ds.
\end{array}
\right.\label{revolution2}
\end{equation}
The expression for $z_3$ can be computed explicitly in terms of special functions, but this is not relevant here. This embedding is actually global for $x\geq1$ (see Figure \ref{f2}). In the following we call this embedding the {\em Grushin trumpet bell}.

\begin{figure}
\begin{center}
\includegraphics[width=1.1\linewidth]{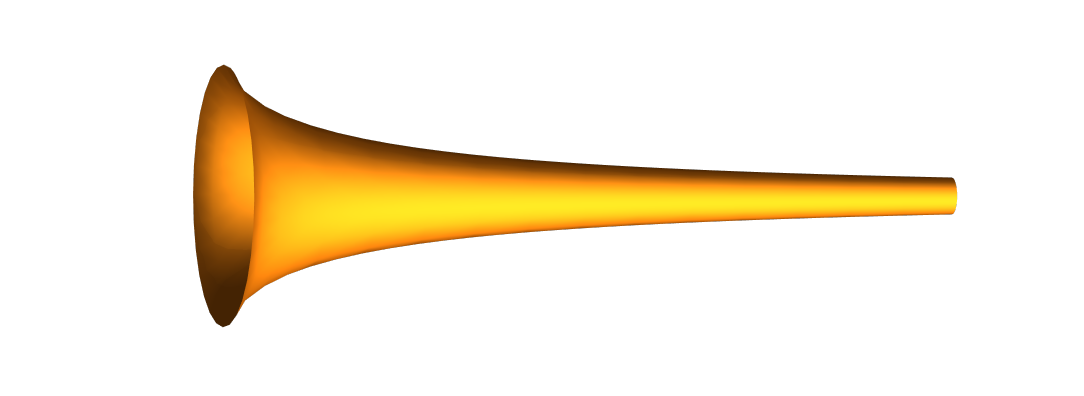}
\caption{The Grushin trumpet bell, i.e., an embedding of the Grushin cylinder on ${\bf R^3}$. In 
 the figure the axis of revolution ($z_3$) is horizontal to render the figure more comparable with Figure \ref{f1}. The bell reaches its largest sectional diameter at the singularity of the embedding ($x=1$). \label{f2}}
\end{center}
\end{figure}

For the Grushin trumpet bell we have that 
$$
H=\frac{x^4-3}{2 x\sqrt{x^4-1}},
$$
and the Gaussian curvature (which does not depend on the embedding) is 
$$
K=-\frac{2}{x^2}.
$$
It follows that the extrinsic Laplacian is  
\begin{align}
\DE&=\Delta -K+H^2= \partial_x^2+x^2\partial_y -\frac{1}{x}\partial_x -\big(-\frac{2}{x^2} \big)+\Big(\frac{x^4-3}{2 x\sqrt{x^4-1}} \Big)^2\nonumber\\
&=
\partial_x^2+x^2\partial_y -\frac{1}{x}\partial_x +\frac{\left(x^4+1\right)^2}{4 x^2 \left(x^4-1\right)}.
\end{align}

\subsection{Study of the self-adjointness of the extrinsic Laplacian}
The next Proposition states that $\DE$ is not essentially self-adjoint $(1,\infty)$ namely 
up to the singularity of the embedding. Actually this is due to the fact that the effective potential
$$
-K+H^2=+\frac{\left(x^4+1\right)^2}{4 x^2 \left(x^4-1\right)}=\frac{1}{4(x-1)}+O(1),~~~\mbox{ for }x\to1
$$
and the operator $-\frac{d^2}{dx^2}-\frac{1}{4(x-1)}$ is in the limit circle case at $x=1$.

\begin{proposition}\label{prop:not-self-adj}
$\DE$ with domain $C^\infty_0((1,\infty)\times S^1)$ is not essentially self-adjoint in $L^2((1,\infty)\times S^1,dA)$.
\end{proposition}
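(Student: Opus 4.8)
The plan is to use the rotational symmetry in $y$ to reduce the problem to a family of one-dimensional Schr\"odinger operators, and then to invoke Weyl's limit point/limit circle theory at the endpoint $x=1$. First I would decompose the Hilbert space as the Fourier sum $L^2((1,\infty)\times S^1,dA)=\bigoplus_{k\in{\bf Z}}L^2((1,\infty),x^{-1}dx)$, writing $\psi(x,y)=\sum_k\psi_k(x)e^{\ii k y}$. Since $\DE$ commutes with rotations in $y$, it is reduced by this decomposition into the one-dimensional operators
$$
\DE^{(k)}=\frac{d^2}{dx^2}-\frac1x\frac{d}{dx}-k^2x^2+\frac{(x^4+1)^2}{4x^2(x^4-1)},
$$
each with domain $C^\infty_0((1,\infty))$ in $L^2((1,\infty),x^{-1}dx)$. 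Essential self-adjointness of $\DE$ is equivalent to that of every $\DE^{(k)}$, so to prove the negative statement it suffices to produce a single mode, say $k=0$, with nonzero deficiency indices: a deficiency element $u_0(x)$ of $\DE^{(0)}$ yields, as the function constant in $y$, a deficiency element of $\DE$.

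Next I would pass to Schr\"odinger form. The operator $\DE^{(k)}$ is a Sturm--Liouville operator $w^{-1}\frac{d}{dx}(p\frac{d}{dx})+(\text{multiplication})$ with $p=w=1/x$, both smooth and strictly positive at $x=1$. The Liouville transformation — the unitary $f\mapsto x^{-1/2}f$ from $L^2(x^{-1}dx)$ onto $L^2(dx)$, followed by a smooth change of the independent variable that is a diffeomorphism near $x=1$ — conjugates $-\DE^{(k)}$ to a Schr\"odinger operator $-\frac{d^2}{d\xi^2}+Q_k$. Because $p$ and $w$ are smooth at the finite endpoint, the contributions of the first-order term and of the weight are bounded there, so the singular part of $Q_k$ comes only from the effective potential. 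Using the expansion already recorded above, this gives $Q_k(x)=-\frac{1}{4(x-1)}+O(1)$ as $x\to1^+$, uniformly in $k$ near $x=1$ (the term $-k^2x^2$ is bounded there), matching the model operator $-\frac{d^2}{dx^2}-\frac{1}{4(x-1)}$ mentioned before the statement.

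I would then carry out the endpoint analysis. The crucial point is that $-\frac{1}{4(x-1)}$ is an inverse-\emph{first}-power singularity, which is subcritical with respect to the inverse-square threshold separating the limit point from the limit circle case. Concretely, after the substitution $t=x-1$ the homogeneous equation $u''+\frac{1}{4t}u+(\text{bounded})\,u=0$ has a regular singular point at $t=0$ with indicial roots $0$ and $1$; a Frobenius analysis shows that both linearly independent solutions remain bounded as $t\to0^+$, at worst up to a factor $\log t$. Hence both solutions lie in $L^2((1,1+\delta),dx)$, so $x=1$ is in the limit circle case; this alone forces the deficiency indices of $\DE^{(0)}$ to be at least $(1,1)$ (and one checks that $x=\infty$ is limit point, the effective potential there being of order $x^2$, so they are exactly $(1,1)$). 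In particular $\DE^{(0)}$ is not essentially self-adjoint, and transporting back through the unitary equivalences gives the same conclusion for $\DE$.

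The main obstacle is precisely this limit circle verification at $x=1$: one must ensure that the bounded perturbation and the spectral parameter do not spoil the classification dictated by the $\frac{1}{x-1}$ singularity. I would handle this either through the explicit two-term asymptotics of the solutions of the regular singular ODE, or by the general principle that perturbing a limit circle operator by a potential integrable near the endpoint preserves the limit circle property. By contrast, the reduction to one dimension and the Liouville transformation are routine once the symmetry is used; the square-integrability of \emph{both} solutions near $x=1$ is the heart of the matter.
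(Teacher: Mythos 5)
Your proposal is correct and follows essentially the same route as the paper's proof: Fourier decomposition in $y$, the unitary $x^{\pm 1/2}$ transformation identifying $L^2(x^{-1}dx)$ with $L^2(dx)$, identification of the subcritical attractive singularity $-\tfrac{1}{4(x-1)}$ of the resulting one-dimensional potentials, and the conclusion that $x=1$ is in the limit circle case, which forces nonzero deficiency indices. The only difference is in how the limit-circle property is justified: the paper cites the criterion of Reed--Simon (Theorem X.7 and Problem 7) for potentials unbounded below at a finite endpoint, whereas you verify it directly by a Frobenius analysis at the regular singular point (indicial roots $0$ and $1$, both solutions bounded, hence square-integrable); both verifications are valid.
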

\begin{proof}
By making the operator $\DE$ acting on function of the form $\sqrt{x}\,\phi(x,y)$ one immediately obtain that the operator $\DE$ on $L^2((1,\infty)\times S^1,dA)$
is unitarily equivalent to the operator
$$
L_{ex}=\partial_x^2+x^2\partial_y -\frac34\frac{1}{x^2}+\frac{\left(x^4+1\right)^2}{4 x^2 \left(x^4-1\right)},
$$
on $L^2((1,\infty)\times S^1,dx\,dy)$.
By making Fourier transform in the $y$ variable, the self-adjointness of $L_{ex}$ is equivalent to the self-adjointness 
of the operators
$$
\hat{L}_{ex}^k=\partial_x^2-k^2x^2 -\frac34\frac{1}{x^2}+\frac{\left(x^4+1\right)^2}{4 x^2 \left(x^4-1\right)},
$$
for every $k\in{\bf N}$. Now
$$
-k^2x^2 -\frac34\frac{1}{x^2}+\frac{\left(x^4+1\right)^2}{4 x^2 \left(x^4-1\right)}=\frac{1}{4(x-1)}+O(1),~~~\mbox{ for }x\to1.
$$
As a consequence, $-\hat{L}_{ex}^k$ can be seen as a one-dimensional Schroedinger operator $-\frac{d^2}{dx^2}+V(x)$ with potential function $V$ decreasing as $x\to 1^+$, which implies that it is in the limit circle case at $x=1$ (see e.g. \cite[Theorem X.7 \& Problem 7]{rs2}).  Hence  for every $k\in{\bf Z}$, we have that $\hat{L}_{ex}^k$ is not essentially self-adjoint. The result follows.

\end{proof}

\begin{remark}
From the proof of Proposition \ref{prop:not-self-adj}, we can moreover deduce the lack of self-adjointness of every fiber operator $\hat{L}_{ex}^k, k\in{\bf Z}$. In other words, $\Delta_{ex}$ has infinite deficiency indices.   
\end{remark}

\subsection{Other possible embedding}

The embedding presented above is global for $x\in[1,\infty)$. The singularity at $x=1$ is due to the fact that while approaching the singular set at $x=0$ the area is growing too much to be realized as a surface of revolution. Actually the tangent space to the Grushin trumpet bell for $x\to1^+$ becomes parallel to the $(z_1,z_2)$ plane. This is encoded in the divergence of the  mean curvature $H$.

Other local embeddings are actually possible. To be able to get closer to the singular set $\{x=0\}$ one needs to make the area growing more than the previous embedding, for instance, by embedding only a part of the Grushin cylinder or by winding it several times around the $z_3$ axis.

Let us exploit this  idea. Let $n\in {{\bf N}^\ast}$. For $x\in[1/n,\infty)$ and $y\in 
(0,2\pi/n^2)$, the Grushin cylinder is locally isometric to the surface (here $x_0$ and $z_{30}$ are fixed  in such a way that $z_3(1/n)=1/n$),

\begin{equation}
\left\{
\begin{array}{l}
z_1=\displaystyle\frac{1}{n^2x} \cos (n^2 y)\\
z_2=\displaystyle\frac{1}{n^2x}  \sin (n^2 y)\\
z_3=\displaystyle\frac1n+\frac1n \int_{1}^{n x}\sqrt{1-\frac{1}{s^4}}\,ds.
\end{array}
\right.\label{revolutionWIND}
\end{equation}
In the following we call this embedding the {\em Grushin $n^2$-winded  bell}, because if $y\in S^1$, instead of $y\in 
(0,2\pi/n^2)$,  then the surface is winded $n^2$ times around the axis of revolution.

For the Grushin $n^2$-winded bell  the mean curvature is
$$
H_n =\frac{n^4x^4-3}{2 x\sqrt{n^4 x^4-1}}.
$$
\begin{figure}
\begin{center}
\includegraphics[width=1.1\linewidth]{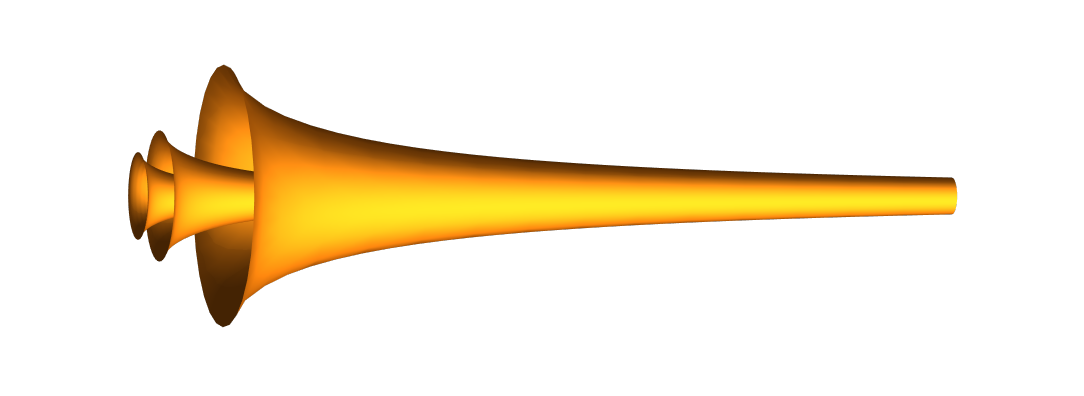}
\caption{Grushin $n^2$-winded bells. These are embeddings of the Grushin cylinder for $(x,y)\in[1/n,\infty)\times (0,2\pi/n^2)$.
As in Figure \ref{f2} the axis of revolution $z_3$ is horizontal. The bells reach their largest diameter at the singularity of the embedding ($x=1/n$). If we take $y\in S^1$ then each bell is winded $n^2$ times.
\label{f3}}
\end{center}
\end{figure}
It follows that the extrinsic Laplacian is  

\begin{align}
\DE^n&=\Delta -K+H^2_n= \partial_x^2+x^2\partial_y -\frac{1}{x}\partial_x -\big(-\frac{2}{x^2} \big)+\Big(\frac{n^4x^4-3}{2 x\sqrt{n^4 x^4-1}}  \Big)^2\nonumber\\
&=
\partial_x^2+x^2\partial_y -\frac{1}{x}\partial_x +
\frac{\left(n^4x^4+1\right)^2}{4 x^2 \left(n^4x^4-1\right)}.
\end{align}
Since for the effective potential is given by 
$$
-K+H^2_n=\frac{\left(n^4x^4+1\right)^2}{4 x^2 \left(n^4x^4-1\right)}=\frac{n}{4(x-\frac1n)}+O(1),~~~\mbox{ for }x\to\frac1n,
$$
as in the case $n=1$ we have the analogous result.

\begin{proposition}
$\DE^n$ with domain $C^\infty_0((\frac1n,\infty)\times S^1)$ is not essentially self-adjoint in $L^2((\frac1n,\infty)\times S^1,dA)$.
\end{proposition}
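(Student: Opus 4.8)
The plan is to transcribe the argument used for Proposition~\ref{prop:not-self-adj}, since the only feature of the case $n=1$ that entered there was the sign and the order of the pole of the effective potential at the singular endpoint, and both are preserved for general $n$. First I would remove the first-order term by conjugating $\DE^n$ with the map $\phi\mapsto\sqrt{x}\,\phi$, which is unitary from $L^2((\frac1n,\infty)\times S^1,dx\,dy)$ onto $L^2((\frac1n,\infty)\times S^1,dA)$ because $dA=\frac1x\,dx\,dy$ on $\{x>0\}$. Since this conjugation acts only in the $x$ variable and the first-order coefficient $-\frac1x\partial_x$ is identical to the $n=1$ case, it produces the same lower-order term, namely $M^{-1}(\partial_x^2-\frac1x\partial_x)M=\partial_x^2-\frac34\frac1{x^2}$, giving the unitarily equivalent operator
$$
L_{ex}^n=\partial_x^2+x^2\partial_y-\frac34\frac1{x^2}+\frac{(n^4x^4+1)^2}{4x^2(n^4x^4-1)}
$$
on $L^2((\frac1n,\infty)\times S^1,dx\,dy)$.

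Next I would take the Fourier transform in $y\in S^1$ to decompose $L_{ex}^n$ as the direct sum of the one-dimensional fiber operators
$$
\hat{L}_{ex}^{n,k}=\partial_x^2-k^2x^2-\frac34\frac1{x^2}+\frac{(n^4x^4+1)^2}{4x^2(n^4x^4-1)},\qquad k\in{\bf Z},
$$
each acting on $L^2((\frac1n,\infty),dx)$. Because deficiency indices add under direct sums, essential self-adjointness of $\DE^n$ is equivalent to essential self-adjointness of every $\hat{L}_{ex}^{n,k}$, so it suffices to exhibit a single fiber (in fact all of them) failing to be essentially self-adjoint.

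Finally, I would use the expansion recorded just before the statement, together with the boundedness of $-k^2x^2-\frac34\frac1{x^2}$ near $x=\frac1n$, to obtain
$$
-k^2x^2-\frac34\frac1{x^2}+\frac{(n^4x^4+1)^2}{4x^2(n^4x^4-1)}=\frac{n}{4(x-\frac1n)}+O(1),\qquad x\to(1/n)^+,
$$
so that the regular terms are absorbed into the $O(1)$. I would then view $-\hat{L}_{ex}^{n,k}$ as a one-dimensional Schr\"odinger operator $-\frac{d^2}{dx^2}+V_n(x)$ whose potential $V_n$ decreases to $-\infty$ like $-\frac{n}{4(x-\frac1n)}$ at the finite endpoint $x=\frac1n$. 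By exactly the criterion cited in the proof of Proposition~\ref{prop:not-self-adj} (\cite[Theorem~X.7 \& Problem~7]{rs2}), a potential decreasing at a finite endpoint places the operator in the limit-circle case there; hence $\hat{L}_{ex}^{n,k}$ is not essentially self-adjoint for every $k\in{\bf Z}$, and the claim follows.

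The only point needing genuine care, and it is routine bookkeeping, is the verification of the displayed Laurent expansion of the effective potential at $x=\frac1n$: writing $x=\frac1n+\delta$ one checks that $n^4x^4-1=4n\,\delta+O(\delta^2)=4n(x-\frac1n)+O((x-\frac1n)^2)$, while the numerator $(n^4x^4+1)^2\to4$ and $4x^2\to4/n^2$, which yields precisely the pole $\frac{n}{4(x-\frac1n)}$ with the correct coefficient. Once this is in place, the limit-circle classification is insensitive to the regular perturbations $k^2x^2$ and $\frac34\frac1{x^2}$, and the argument reduces verbatim to that of Proposition~\ref{prop:not-self-adj}.
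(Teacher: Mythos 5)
Your proposal is correct and takes essentially the same approach as the paper: the paper's own proof consists precisely of noting the expansion $-K+H_n^2=\frac{n}{4(x-\frac1n)}+O(1)$ and declaring the result ``analogous'' to the $n=1$ case, and what you have written is exactly that $n=1$ argument of Proposition~\ref{prop:not-self-adj} (conjugation by $\sqrt{x}$, Fourier decomposition in $y$, limit-circle classification at the finite endpoint) carried out with the correct $n$-dependent coefficients. Your verification that the pole has coefficient $\frac{n}{4}$ agrees with the asymptotics stated in the paper, so nothing is missing.
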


Unfortunately it seems hard to say anything about the self-adjointness of  $\DE^n$ in the limit $n\to\infty$.

\section{The $\alpha$-Grushin cylinder}

A natural generalization of the Grushin cylinder 
is the $\alpha$-Grushin cylinder 
${\bf R}\times S^1$ endowed with the generalized Riemannian metric 
\begin{equation}
 {\bf g}= \left(\begin{array}{cc} 1&0\\0&\frac{1}{|x|^{2\alpha}}  \end{array}\right),~~~\alpha\in{\bf R}.\label{alpa}
\end{equation}
Notice that the corresponding Riemannian area is 
$$
dA^\alpha=\frac{1}{|x|^\alpha}dxdy.
$$

This class includes as a special case the Grushin cylinder ($\alpha=1$), the standard Euclidean cylinder ($\alpha=0$), a punctured plane ($\alpha=-1$) and a quadratic cusp ($\alpha = -2$).  For $\alpha\in{\bf N}$ the $\alpha$-Grushin cylinder is an almost-Riemannian manifold in the sense of \cite{ABS,ABB}.

Similarly to the previous section, we can find embedding  in ${\bf R}^3$ as surfaces of revolution. For $\alpha\neq-1$, let us define 
$$s_0(\alpha)=|\alpha|^{\frac{1}{1+\alpha}},$$
As a function of $\alpha$ the graph of $s_0$ is pictured in Figure \ref{s_0}.

\begin{figure}[ht]
\begin{center}
\includegraphics[width=0.4\linewidth]{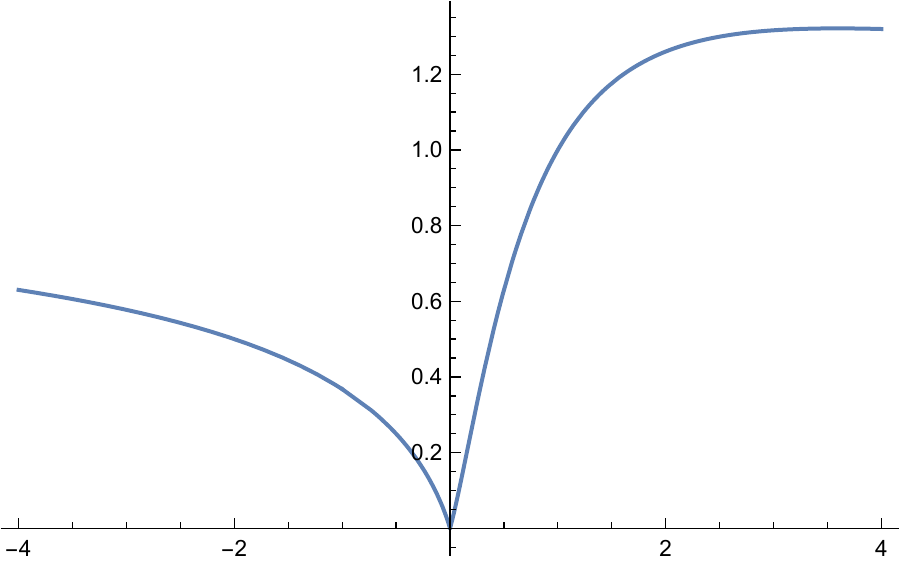}
\caption{The function $s_0(\alpha)$ representing the value of $x$ at which the embedding fails. \label{s_0}}
\end{center}
\end{figure}

For $\alpha>-1$ and $\alpha \neq 0$ the embedding is possible for $x\geq s_0(\alpha)$ (which is the interval for which the square root in the next formula is real) and  we have,
\begin{equation}
\left\{
\begin{array}{l}
z_1=\displaystyle x^{-\alpha} \cos (y)\\
z_2=\displaystyle x^{-\alpha}  \sin (y)\\
z_3=s_0(\alpha)+\displaystyle \int_{s_0(\alpha)}^{x}\sqrt{1-\frac{\alpha^2}{s^{2(1+\alpha)}}}\,ds.
\end{array}
\right.\label{revolution_positive_alpha}
\end{equation}
Here the constants are chosen in such a way that $z_3(s_0(\alpha))=s_0(\alpha)$.

For $\alpha<-1$  the embedding is possible for $x\in(0,s_0(\alpha)]$  and  it is given by

$$
\left\{
\begin{array}{l}
z_1=\displaystyle x^{-\alpha} \cos (y)\\
z_2=\displaystyle x^{-\alpha}  \sin (y)\\
z_3=\displaystyle \int_{0}^{x}\sqrt{1-\frac{\alpha^2}{s^{2(1+\alpha)}}}\,ds.\end{array}
\right.\label{revolution_negative_alpha}
$$
Here the constants are chosen in such a way that $z_3(0)=0$.

For  $\alpha = 0$ the obtain the standard embedding of the cylinder which is possible for every value of $x\in{\bf R}$, 
$$
\left\{
\begin{array}{l}
z_1=\displaystyle \cos (y)\\
z_2=\displaystyle \sin (y)\\
z_3=\displaystyle x.
\end{array}
\right.
$$

For  $\alpha = -1$ we obtain the embedding of a punctured plane in polar coordinates (here we have  $x>0$),
$$
\left\{
\begin{array}{l}
z_1=\displaystyle x \cos (y)\\
z_2=\displaystyle x  \sin (y)\\
z_3=\displaystyle 0.
\end{array}
\right.
$$
Some of these embeddings are pictured in Figures \ref{falfa-1}, \ref{falfa-2}, \ref{falfa-3}, \ref{falfa-4}.

 For every value of $\alpha$, the Gaussian curvature is given by
$$
K= -\frac{\alpha(1+\alpha)}{x^2}.
$$
 For $\alpha \neq -1$, the mean curvature is given by
$$
H=\frac{x^{2(1+\alpha)}-\alpha(1+2\alpha)}{2x \sqrt{x^{2(1+\alpha)}-\alpha^2}},
$$
and for $\alpha = -1$ it is given by the limit of this expression, which is zero. 

For the effective potential we find
\begin{equation}
-K + H^2= \frac{(x^{2(1+\alpha)}+\alpha)^2}{4x^2(x^{2(1+\alpha)}-\alpha^2)} = \frac{1+\alpha}{8 |\alpha|^{\frac{1}{1+\alpha}}(x-|\alpha|^{\frac{1}{1+\alpha}})} + O(1),~~~\mbox{ for }x\to|\alpha|^{\frac{1}{1+\alpha}}.\label{plutonio}
\end{equation}
Hence, once again we find the same lack of self-adjointness.
\begin{proposition}
Consider the generalized Riemannian metric \eqref{alpa} on ${\bf R}\times S^1$. Let $\Delta$ be its Laplace-Beltrami operator  and $\DE^\alpha=\Delta-K + H^2$ its extrinsic Laplacian corresponding to the embeddings described above. We have the following:
\begin{itemize}
\item for $\alpha\geq-1$, $\DE^\alpha$   with domain  $C^\infty_0((s_0(\alpha),\infty)\times S^1)$ is not essentially self-adjoint  in $L^2((s_0(\alpha),\infty)\times S^1,dA^\alpha)$;
\item  for $\alpha<-1$, $\DE^\alpha$   with domain  $C^\infty_0((0,s_0(\alpha))\times S^1)$  is not essentially self-adjoint  in $L^2((0,s_0(\alpha))\times S^1,dA^\alpha)$.
\end{itemize}

\end{proposition}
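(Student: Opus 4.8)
The plan is to follow verbatim the strategy used for Proposition~\ref{prop:not-self-adj}, now keeping the exponent $\alpha$ general and with the distinguished endpoint being $s_0(\alpha)=|\alpha|^{\frac{1}{1+\alpha}}$ instead of $1$. Throughout I work on the region $x>0$, which is where all the embeddings \eqref{revolution_positive_alpha} (and the one for $\alpha<-1$) live. The Laplace--Beltrami operator of \eqref{alpa} reads $\Delta=\partial_x^2-\frac{\alpha}{x}\partial_x+x^{2\alpha}\partial_y^2$, and the first step is to remove both the singular weight $|x|^{-\alpha}$ of $dA^\alpha$ and the first-order term by conjugating with the unitary map $U\colon L^2(dA^\alpha)\to L^2(dx\,dy)$, $U\phi=x^{-\alpha/2}\phi$, equivalently by letting $\DE^\alpha$ act on functions of the form $x^{\alpha/2}\psi$ (for $\alpha=1$ this is exactly the $\sqrt{x}$ substitution of the previous proof).

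A direct computation, substituting $x^{\alpha/2}\psi$ into the radial part, shows that the first-order term cancels and an inverse-square potential is produced: one finds $x^{-\alpha/2}\big(\partial_x^2-\tfrac{\alpha}{x}\partial_x\big)\big(x^{\alpha/2}\psi\big)=\psi''-\frac{\alpha(\alpha+2)}{4x^2}\psi$. Since multiplication by $x^{\alpha/2}$ commutes with $x^{2\alpha}\partial_y^2$ and with the effective potential $-K+H^2$, the conjugated operator is
$$
L_{ex}^\alpha=U\,\DE^\alpha\,U^{-1}=\partial_x^2+x^{2\alpha}\partial_y^2-\frac{\alpha(\alpha+2)}{4x^2}+\big(-K+H^2\big),
$$
acting on $L^2(dx\,dy)$. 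Taking the Fourier transform in the $y\in S^1$ variable, so that $\partial_y^2\mapsto-k^2$, reduces essential self-adjointness of $L_{ex}^\alpha$ to that of the one-dimensional fiber operators $\hat L_{ex}^{\alpha,k}=\frac{d^2}{dx^2}-k^2x^{2\alpha}-\frac{\alpha(\alpha+2)}{4x^2}+(-K+H^2)$, for every $k\in\mathbf Z$.

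Next I rewrite $-\hat L_{ex}^{\alpha,k}=-\frac{d^2}{dx^2}+V_k(x)$ with $V_k(x)=k^2x^{2\alpha}+\frac{\alpha(\alpha+2)}{4x^2}-(-K+H^2)$ and examine the endpoint $x\to s_0(\alpha)$. Because $s_0(\alpha)>0$ for $\alpha\neq0,-1$, the terms $k^2x^{2\alpha}$ and $\frac{\alpha(\alpha+2)}{4x^2}$ are smooth and bounded in a neighbourhood of $s_0(\alpha)$, so the singular behaviour is governed entirely by the effective potential. By the expansion \eqref{plutonio}, $-K+H^2=\frac{1+\alpha}{8\,s_0(\alpha)\,(x-s_0(\alpha))}+O(1)$, whence $V_k(x)=-\frac{1+\alpha}{8\,s_0(\alpha)\,(x-s_0(\alpha))}+O(1)$. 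In both regimes this forces $V_k\to-\infty$ at the relevant approach: for $\alpha>-1$ the coefficient $\frac{1+\alpha}{8 s_0(\alpha)}$ is positive and $x\to s_0(\alpha)^+$, while for $\alpha<-1$ the coefficient is negative but $x\to s_0(\alpha)^-$ so that $x-s_0(\alpha)<0$; in either case $V_k$ is a decreasing potential diverging to $-\infty$ at $s_0(\alpha)$. By the same limit-circle criterion used before (\cite[Theorem X.7 \& Problem 7]{rs2}), $s_0(\alpha)$ is a limit-circle endpoint for each fiber operator, so no $\hat L_{ex}^{\alpha,k}$ is essentially self-adjoint; consequently neither is $\DE^\alpha$, which yields both bullet points of the statement.

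The only genuinely new bookkeeping, compared with Proposition~\ref{prop:not-self-adj}, is to carry the conjugation through for general $\alpha$ (obtaining the coefficient $\frac{\alpha(\alpha+2)}{4}$ of the inverse-square term) and to treat the two sign regimes uniformly. I expect the main point requiring care to be verifying that the auxiliary terms $k^2x^{2\alpha}$ and $\frac{\alpha(\alpha+2)}{4x^2}$ remain regular at $s_0(\alpha)$, so that the $-\infty$ singularity of $V_k$ indeed comes solely from \eqref{plutonio}, together with the observation that a limit-circle endpoint at $s_0(\alpha)$ alone---irrespective of the nature of the other endpoint, $0$ or $+\infty$---already destroys essential self-adjointness.
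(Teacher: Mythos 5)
Your proposal is correct and follows essentially the same route as the paper: the paper's own (largely implicit) proof is precisely the conjugation $Uf=x^{-\alpha/2}f$ producing the inverse-square term $\frac{\alpha}{2}\bigl(1+\frac{\alpha}{2}\bigr)\frac{1}{x^2}=\frac{\alpha(\alpha+2)}{4x^2}$, Fourier decomposition in $y$, and the limit-circle criterion at $x=s_0(\alpha)$ driven by the pole in \eqref{plutonio}, exactly as recorded in the remark following the proposition. The only caveat, which you partially flag and which the paper shares (its expansion \eqref{plutonio} degenerates there too), is that the values $\alpha=0$ and $\alpha=-1$ included in the first bullet fall outside this argument; for those the claim is immediate anyway, since the effective potential is then constant and the fiber operators have a regular, hence limit-circle, finite endpoint.
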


\begin{remark}
Due to the asymptotic \eqref{plutonio}, boundary conditions are always necessary at $x=s_0(\alpha)$ to give a meaning to the Cauchy problem. 
In the case $\alpha<-1$ the embeddings are defined in the interval $(0,s_0(\alpha)]$. Actually the non-selfadjointness comes from both extremities of this interval (meaning that boundary conditions are necessary at $x=0$ as well).

 To see this 
let us  make a unitary transform $Uf(x,y) = x^{-\alpha/2}f(x,y)$. This gives the operator
$$
L_{ex} = U\Delta_{ex} U^{-1} = \partial_x^2 + x^{2\alpha}\partial_y^2 -\frac{\alpha}{2}\left(1+\frac{\alpha}{2} \right) \frac{1}{x^2} + \frac{(x^{2(1+\alpha)}+\alpha)^2}{4x^2(x^{2(1+\alpha)}-\alpha^2)}.
$$
Taking the Fourier transform in the $y$-variable gives us a family of operators
$$
\hat{L}^k_{ex} = \partial_x^2 - k^2 x^{2\alpha} -\frac{\alpha}{2}\left(1+\frac{\alpha}{2} \right) \frac{1}{x^2} + \frac{(x^{2(1+\alpha)}+\alpha)^2}{4x^2(x^{2(1+\alpha)}-\alpha^2)}, \quad k\in{\bf Z}.
$$
For $\alpha< -1$ we get
$$
\hat{L}^k_{ex} = \partial_x^2 + \left(\frac{1}{4}-k^2\right) x^{2\alpha} + o(x^{2\alpha}),~~~\mbox{ for }x\to 0^+
$$
hence $\hat{L}^{0}_{ex}$ is in the limit circle case at $x=0$ as well.

\end{remark}

\noindent
{\bf Acknowledgements}\\[1mm]
This work has been partly supported by  the ANR-DFG projects “CoRoMo” ANR-22-CE92-0077-01. D.~Cannarsa was supported by the Academy of Finland, grant 322898 \textit{``Sub-Riemannian Geometry via Metric-geometry and Lie-group Theory''}.
E.~Pozzoli acknowledges financial support from the STARS Consolidator Grant 2021 “NewSRG” of the University of Padova, and from PNRR MUR project PE0000023-NQSTI. I.~Beschastnyi was supported through the CIDMA Center for Research and Development in Mathematics and Applications, and the Portuguese Foundation for Science and Technology (``FCT - Funda\c{c}\~ao para a Ci\^encia e a Tecnologia") within the project UIDP/04106/2020 and UIDB/04106/2020.

The authors are grateful to David Krejcirik for illuminating discussions concerning his works \cite{David2, David3}.

\bibliographystyle{amsalpha}
\bibliography{references}

\begin{figure}[ht]
\includegraphics[width=0.5\linewidth]{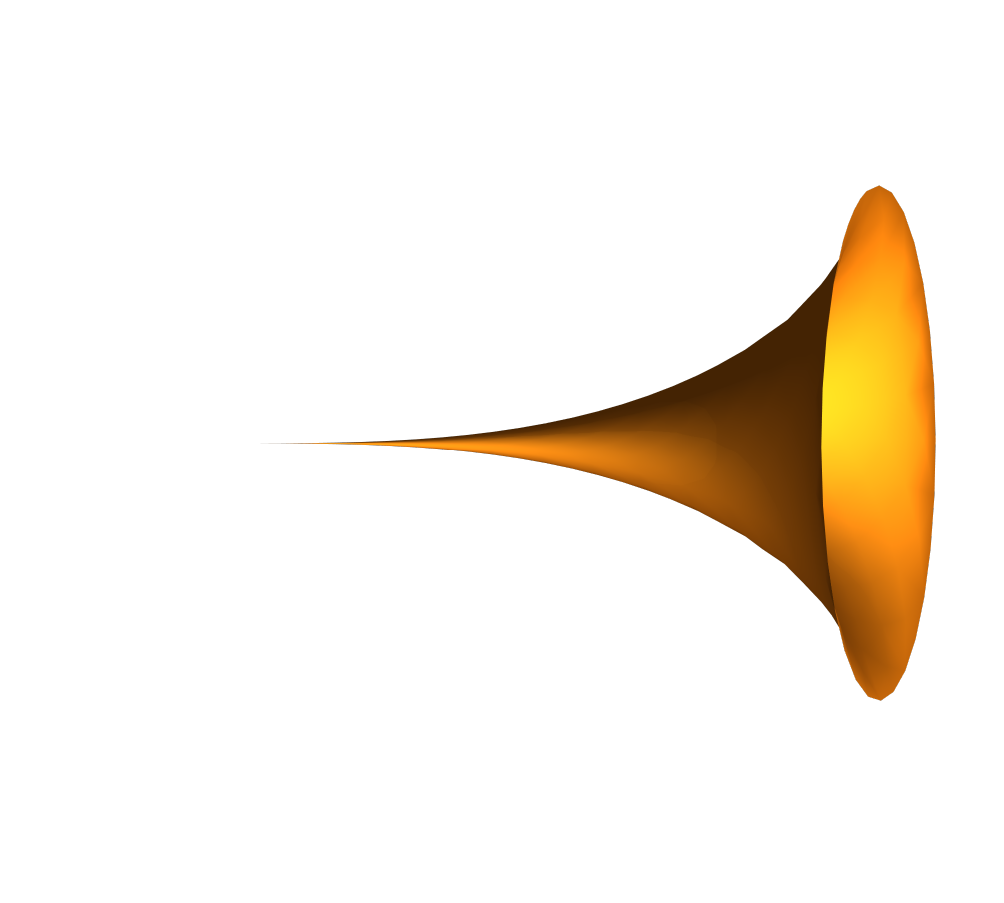}~\includegraphics[width=0.5\linewidth]{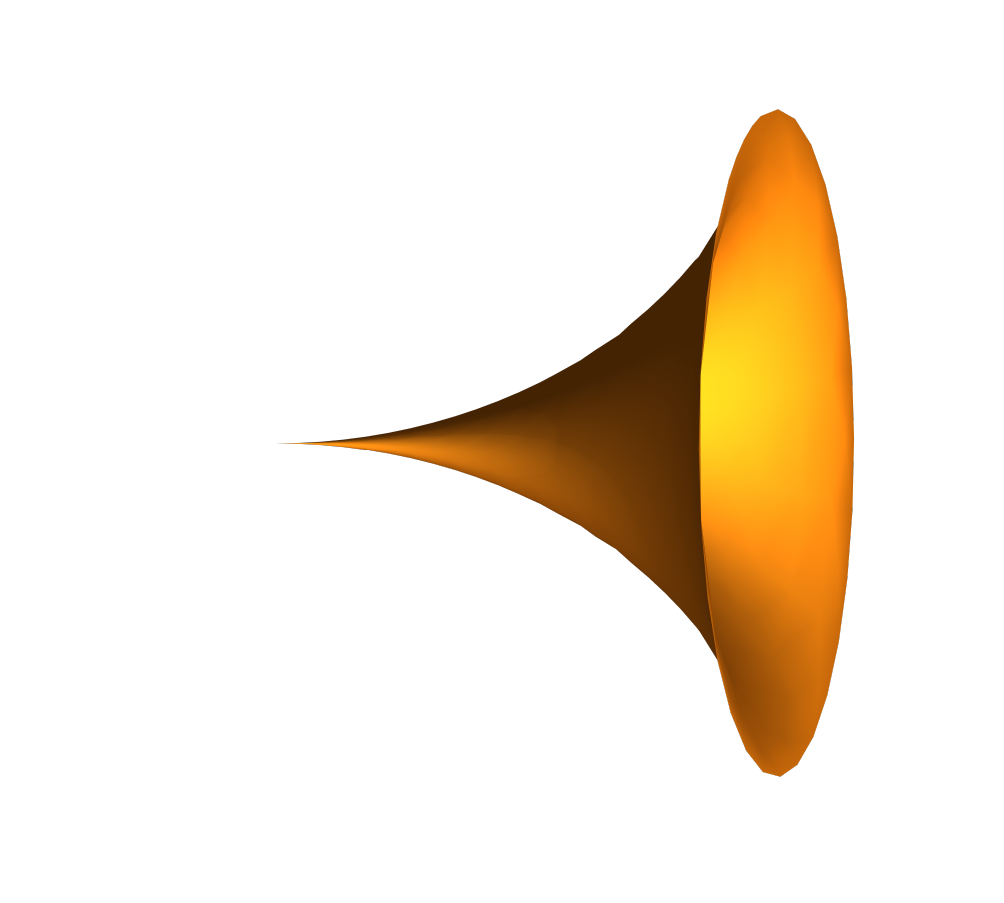} 

~~~~~~~~~~~~~~~~~~~~~~~~~~~~~~~~~~~$\alpha=-3$~~~~~~~~~~~~~~~~~~~~~~~~~~~~~~~~~~~~~~~~~~~~~~~~~~~~~~~~~~~~~~~~~~$\alpha=-2$

\caption{Embeddings of the $\alpha$-Grushin cylinder, for $\alpha<-1$. The cusp corresponds  the point $x=0$.  \label{falfa-1}}
\end{figure}

\begin{figure}[ht]

\includegraphics[width=0.33\linewidth]{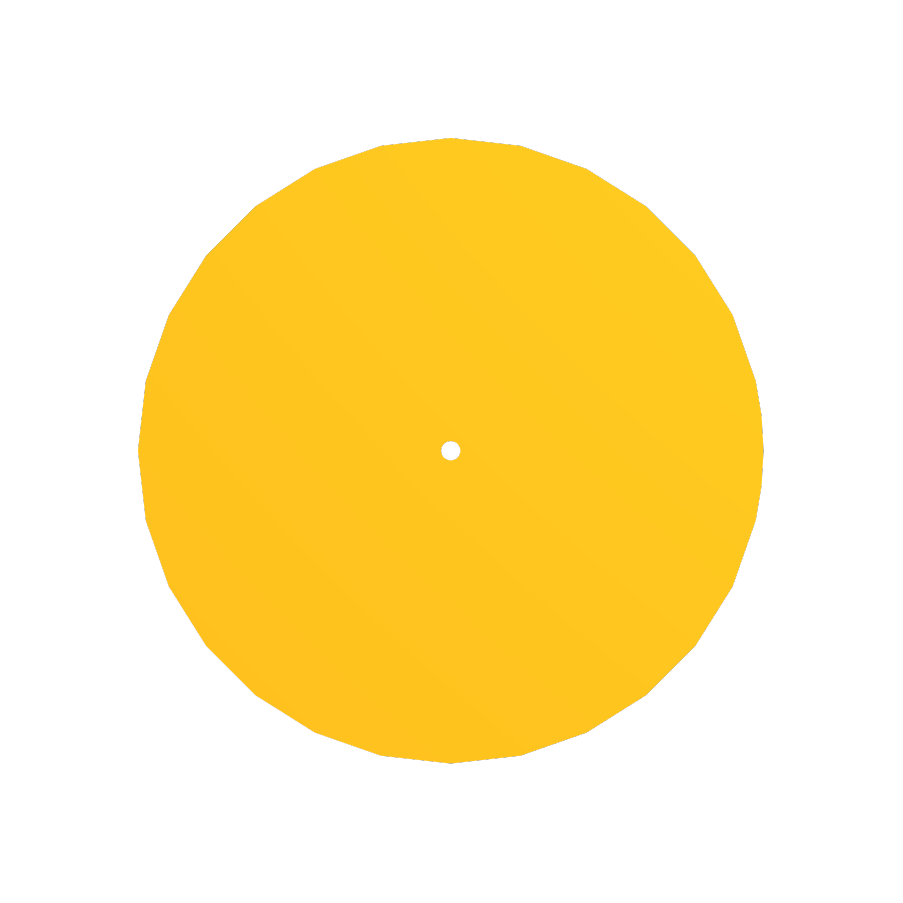}~~~~~~~~~~~~~~~~~~~~~~~~~~~~~~~~~~~\includegraphics[width=0.33\linewidth]{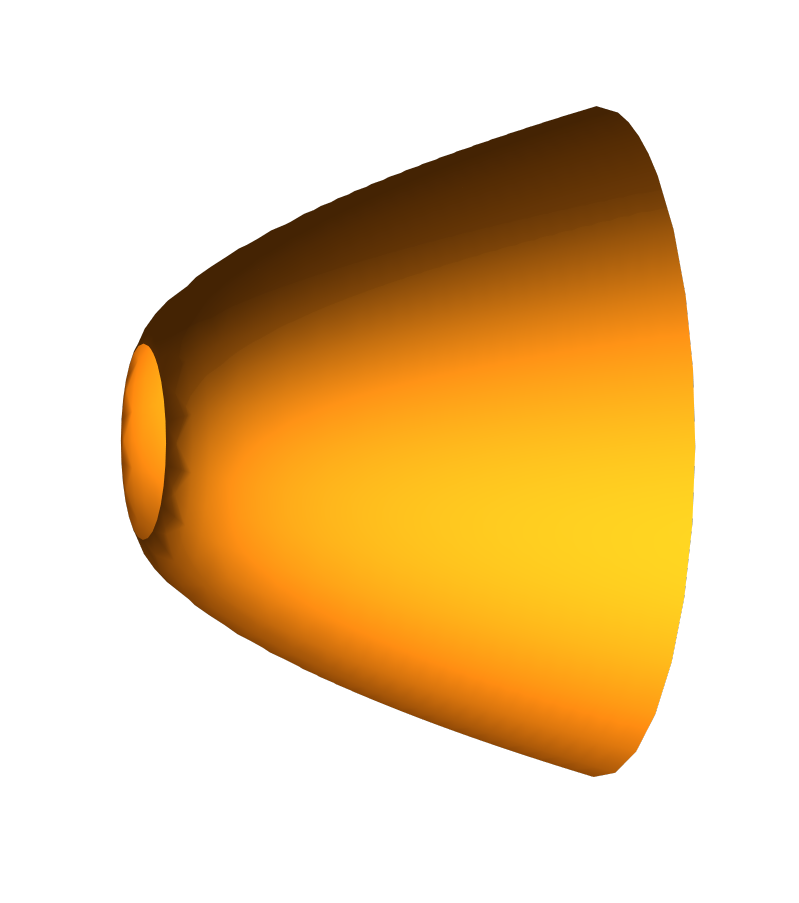}

~~~~~~~~~~~~~~~~~~~~~~~~~~~~~~~~~~~$\alpha=-1$~~~~~~~~~~~~~~~~~~~~~~~~~~~~~~~~~~~~~~~~~~~~~~~~~~~~~~~~~~~~~~~$\alpha=-1/2$

\caption{Embeddings of the $\alpha$-Grushin cylinder, for $\alpha\in[-1,0)$.   \label{falfa-2}}

\end{figure}

\begin{figure}[ht]

\includegraphics[width=0.33\linewidth]{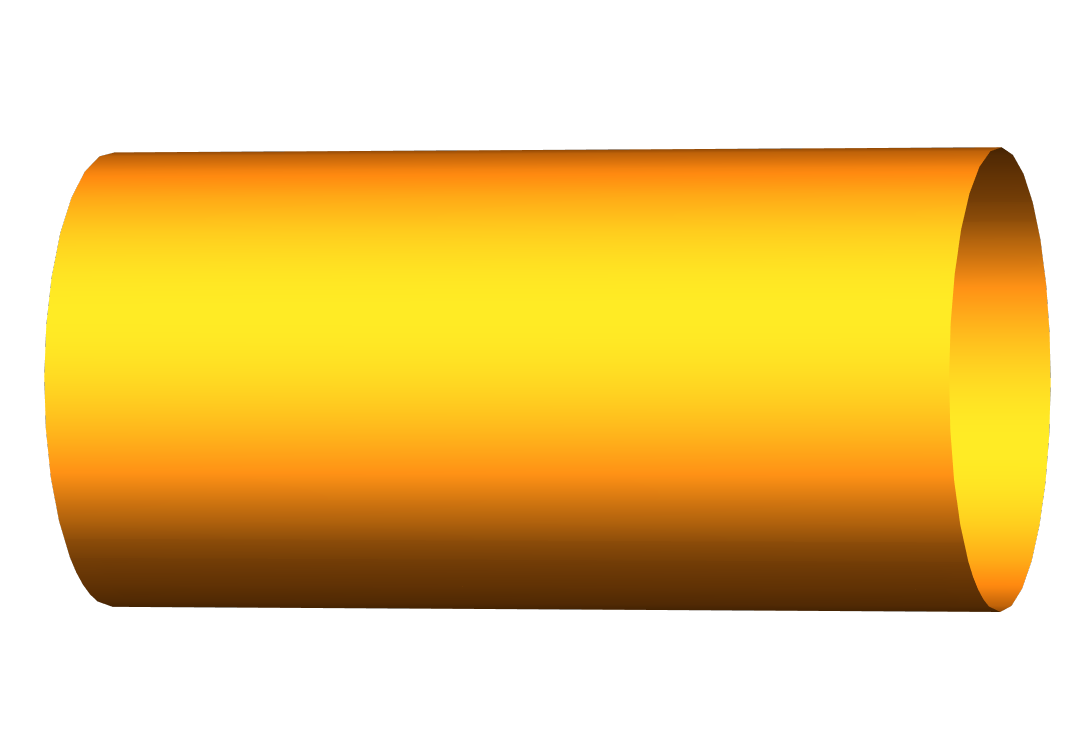}~~~~~~~~~~~~~~~~~~~~~~~~~~~~~~~~~~~\includegraphics[width=0.33\linewidth]{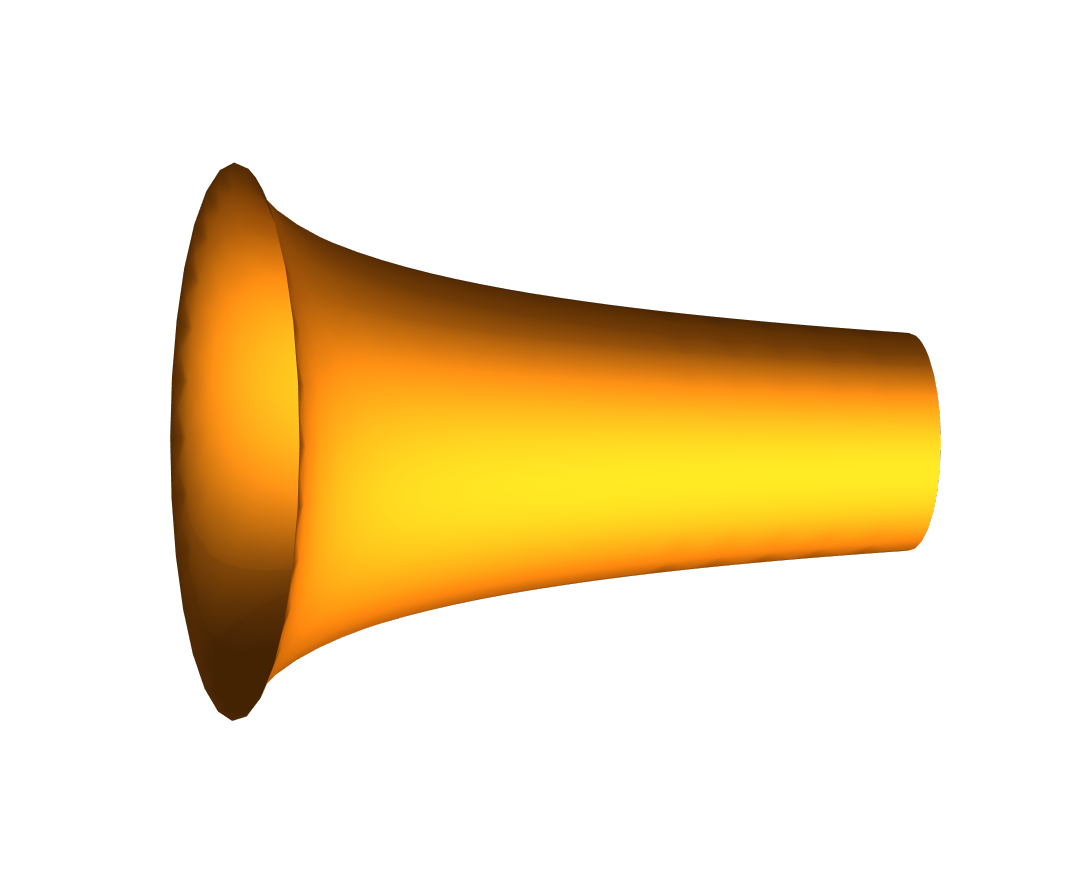}

~~~~~~~~~~~~~~~~~~~~~~~~~~~~~~~~~~~$\alpha=0$~~~~~~~~~~~~~~~~~~~~~~~~~~~~~~~~~~~~~~~~~~~~~~~~~~~~~~~~~~~~~~~$\alpha=1/2$
\caption{Embeddings of the $\alpha$-Grushin cylinder, for $\alpha\in[0,1)$.   \label{falfa-3}}
\end{figure}

\begin{figure}[ht]

\includegraphics[width=0.5\linewidth]{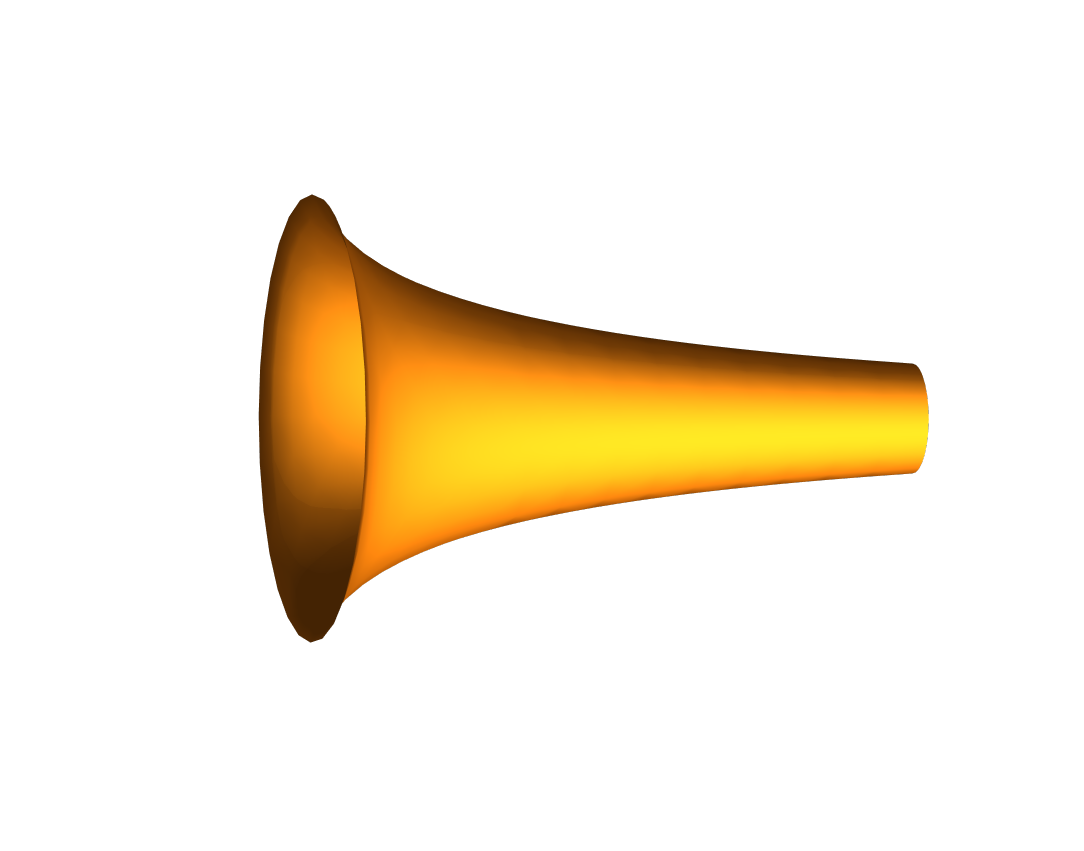}~~~~\includegraphics[width=0.5\linewidth]{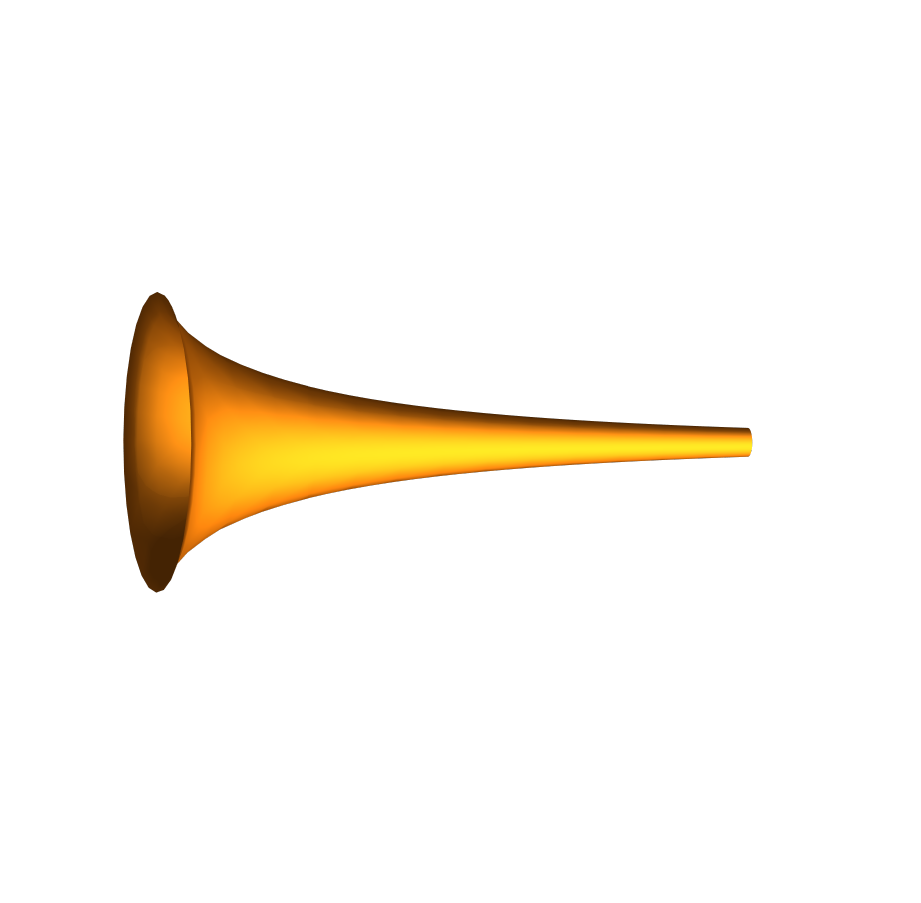} 

~~~~~~~~~~~~~~~~~~~~~~~~~~~~~~~~~~~$\alpha=1$~~~~~~~~~~~~~~~~~~~~~~~~~~~~~~~~~~~~~~~~~~~~~~~~~~~~~~~~~~~~~$\alpha=2$

\caption{Embeddings of the $\alpha$-Grushin cylinder, for $\alpha\geq1$.   \label{falfa-4}}
\end{figure}

 \end{document}